\theoremstyle{plain}
\newtheorem{theorem}{Theorem}[section]
\newtheorem{lemma}[theorem]{Lemma}
\newtheorem{proposition}[theorem]{Proposition}
\theoremstyle{definition}
\newtheorem{definition}[theorem]{Definition}
\newtheorem{example}[theorem]{Example}
\newtheorem{conjecture}[theorem]{Conjecture}
\newtheorem{assumption}[theorem]{Assumption}
\newtheorem{remark}[theorem]{Remark}
\numberwithin{equation}{section}
\numberwithin{figure}{section}
\newcommand{\dx}{\; {\rm d}x}
  \def\mG{\mathsf{G}} 
  \def\mV{\mathsf{V}}
  \def\mE{\mathsf{E}}
 \def\mv{\mathsf{v}}
 \def\me{\mathsf{e}}
 \def\mw{\mathsf{w}}
  \def\mf{\mathsf{f}}
\newcommand{\R}{\mathbb{R}}
\newcommand{\N}{\mathbb{N}}
\newcommand{\Graph}{\mathcal{G}} 
\newcommand{\VM}{\mathcal{B}} 
\newcommand{\form}{a} 
\newcommand{\Op}{A} 
\DeclareMathOperator{\sign}{sgn} 
\DeclareMathOperator{\acc}{acc} 
\title{On Pleijel's nodal domain theorem for quantum graphs} 
\subjclass[2010]{}
\keywords{Quantum graphs, nodal partitions}
\author[M.~Hofmann]{Matthias Hofmann}
\author[J.~B.~Kennedy]{James B.~Kennedy}
\author[D.~Mugnolo]{Delio Mugnolo}
\author[M.~Pl\"umer]{Marvin Pl\"umer}
\address{Matthias Hofmann, Grupo de F\'isica Matem\'atica, Faculdade de Ci\^encias, Universidade de Lisboa, Campo Grande, Edif\'icio C6, P-1749-016 Lisboa, Portugal}
\email{mhofmann@fc.ul.pt}
\address{James B.~Kennedy, Grupo de F\'isica Matem\'atica \textit{and} Departamento de Matem\'atica, Faculdade de Ci\^encias, Universidade de Lisboa, Campo Grande, Edif\'icio C6, P-1749-016 Lisboa, Portugal}
\email{jbkennedy@fc.ul.pt}
\address{Delio Mugnolo, Lehrgebiet Analysis, Fakult\"at Mathematik und Informatik, Fern\-Universit\"at in Hagen, D-58084 Hagen, Germany}
\email{delio.mugnolo@fernuni-hagen.de}
\address{Marvin Pl\"umer, Lehrgebiet Analysis, Fakult\"at Mathematik und Informatik, Fern\-Universit\"at in Hagen, D-58084 Hagen, Germany}
\email{marvin.pluemer@fernuni-hagen.de}
\date{\today}
\thanks{The work of M.H. and J.B.K. was supported by the Funda\c{c}\~ao para a Ci\^encia e a Tecnologia, Portugal, via the program ``Investigador FCT'', , reference IF/01461/2015 (J.B.K.), and ``Bolseiro de Investigação'', reference
PD/BD/128412/2017 (M.H.), and via project PTDC/MAT-CAL/4334/2014 (M.H. and J.B.K.). The work of D.M. and M.P.\ was supported by the Deutsche Forschungsgemeinschaft (Grant 397230547). The authors would like to acknowledge that this article is based upon work from COST Action CA18232 MAT-DYN-NET, supported by COST (European Cooperation in Science and Technology).}
\begin{document}

\begin{abstract}
We establish metric graph counterparts of Pleijel's theorem on the asymptotics of the number of nodal domains $\nu_n$ of the $n$-th eigenfunction(s) of a broad class of operators on compact metric graphs, including Schrödinger operators with $L^1$-potentials and a variety of vertex conditions as well as the $p$-Laplacian with natural vertex conditions, and without any assumptions on the lengths of the edges, the topology of the graph, or the behaviour of the eigenfunctions at the vertices. {Among other things, these results characterise the accumulation points of the sequence $(\frac{\nu_n}{n})_{n\in\N}$, which are shown always to form a finite subset of $(0,1]$. This} extends the previously known result that $\nu_n\sim n$ \textit{generically}, for certain realisations of the Laplacian, in several directions. In particular, in the special cases of the Laplacian with natural conditions, we show that for graphs with rationally dependent edge lengths, one can find eigenfunctions thereon for which ${\nu_n}\not\sim {n}$; but in this case even the set of points of accumulation may depend on the choice of eigenbasis.
\end{abstract}

\maketitle

\section{Introduction}
Given a {differential} operator with a real eigenfunction $\psi$, its \textit{nodal domains} are the connected components in the support of its positive part along with the connected components in the support of its negative part.
 
The classical Oscillation Theorem, first proved in Sturm's classical paper~\cite{Stu36}, states that the $n$-th eigenfunction $\psi_k$ of a Sturm--Liouville operator with continuous coefficients and separated boundary conditions on a compact interval has $n-1$ zeros in the interior of the interval, that is, $\nu_n=n$ nodal domains. We refer to~\cite{Hin05} for a historical overview of the generalisations of this result, including more general coefficients and boundary conditions.

The counterpart in higher dimensions, Courant's Nodal Domain Theorem~\cite{Cou23}, states that the number $\nu_n$ of nodal domains of the eigenfunction $\psi_n$ associated with the $n$-th eigenvalue of the Dirichlet Laplacian on a bounded domain in $\R^d$ is no larger than $n$. Pleijel's theorem \cite{Ple56}, which establishes an asymptotic bound on the quotient $\nu_n/n$, sharpens Courant's result by stating that the number of eigenvalues for which equality may hold is finite if $d=2$. 

The main goal of the present note is to discuss the behaviour of the sequence $\nu_n/n$, and thus explore the validity, or lack thereof, of Pleijel's theorem, in an intermediate setting between \textit{intervals} on the one hand and higher-dimensional \textit{domains} on the other: compact quantum graphs -- i.e., Laplacians or similar operators on metric graphs with finitely many, finite, edges~\cite{BerKuc13} -- somehow lie in between. This is essentially due to the topological configurations that can be taken by metric graphs.

Over the last century, Courant's theorem has been extended to various settings; quoting~\cite{Ale98}, its proof boils down to three points:\\[-30pt]
{\begin{quote}
\emph{
\begin{enumerate}[(a)]
\item the variational characterisation of eigenvalues,
\item the maximum principle,
\item the Unique Continuation Property. 
\end{enumerate}
}
\end{quote}}

And this is where troubles arise. Indeed, the first two points (a), (b) can be enforced by assuming the relevant operator to be associated with a quadratic form that satisfies the first Beurling--Deny condition and whose domain is compactly embedded in the Hilbert space: this includes e.g.\ the cases of Laplacians with Neumann or Robin boundary conditions; of discrete Laplacians on combinatorial graphs; or Laplacians on metric graphs with natural vertex conditions (i.e., continuity on the whole metric graph along with a Kirchhoff condition on the normal derivatives at each vertex); this is the setting of the most general currently available versions of Courant's theorem~\cite{KelSch20}. 

But the third one (c) has a geometric flavour and is known to fail -- in general -- for both combinatorial and metric graphs. 

Given an eigenfunction $\psi$, failure of the Unique Continuation Property on a (combinatorial or metric) graph implies that classical nodal domains may not exhaust the graph: this suggests to study both \textit{strong} and \textit{weak} nodal domains, i.e., the (closures of the) connected components of both $\{\psi>0\}$ and $\{\psi\ge 0\}$, and likewise of both $\{\psi<0\}$ and $\{\psi\le 0\}$. Courant-type bounds on the nodal count of eigenvectors associated with the $n$-th (possibly non-simple) eigenvalue of a combinatorial graph were derived in the seminal paper~\cite{DavGlaLey01}; see also~\cite{Urs18} for an overview and refinement of later results inspired by~\cite{DavGlaLey01}. A further complication, both on combinatorial and metric graphs, arises from the possibility that $\nu_n$ may depend on the specific choice of eigenbasis if the eigenvalues are not simple.

However, on metric graphs we still have the property that if an eigenfunction vanishes on an open subset of an edge, then it must vanish on the whole edge; so a way to enforce the Unique Continuation Property, and also to remove any ambiguities regarding non-simple eigenvalues, is simply to assume that all eigenvalues are simple and no eigenfunctions vanish on any vertices, which is known to be true for the usual realisations of the Laplacian under certain topological assumptions on the graph (no loops) and then genericity assumptions on the edge lengths. Under these assumptions, it is known that equality of $\nu_n$ and $n$ holds for metric trees (\cite{PokPryAlo96}) and only for them (\cite{Ban14}).

In the case of quantum graphs which may have cycles, the Courant--Pleijel theory was first obtained -- again, only under the aforementioned genericity assumptions -- by Gnutzmann, Smilansky and Weber in~\cite{GnuSmiWeb04}; their proof mirrors the original one by Pleijel but, unlike in Pleijel's result, it only yields that the number $\nu_n$ of nodal domains  associated with the $n$-th eigenfunction is \textit{generically} bounded from above by $n$. Under the same assumptions, the nodal deficiency $n-\nu_n$ has since been studied by Band, Berkolaiko and their co-authors in several papers since~\cite{BanBerRaz12} {(see, e.g., \cite{ABB18,BerWey14} and the references therein)}.

One of the goals of this note is to describe the asymptotic behaviour of the sequence \(\nu_n\) as \(n\rightarrow\infty\), removing this genericity assumption, for any given choice of orthonormal basis of eigenfunctions, where far richer behaviour is possible. Here we propose a refinement that uses an elementary isoperimetric inequality due to Nicaise~\cite{Nic87} {which} seems to have been little known at the time of~\cite{GnuSmiWeb04}. This way we can drop any condition on the eigenfunctions, {including the usual genericity conditions,} which seem hard to check on graphs with complicated topologies and typically fail on graphs with non-trivial automorphism group. More interestingly, we present classes of graphs (including all graphs whose shortest edge is a loop) for which the number of nodal domains -- counted along a suitable sequence of mutually orthogonal eigenfunctions --  is actually strictly smaller than the index of the corresponding eigenvalue, {and remains so in the asymptotic limit}. We will also demonstrate the strength of our approach by applying it to two classes of operators not previously considered in this context: quite general Schrödinger operators with (non-negative) $L^1$-potentials and a large variety of vertex conditions, and the $p$-Laplacian with natural vertex conditions.

Let us now describe our main results, and the structure of the paper, more precisely. After recalling the standard definition of metric graphs and introducing the general class of Schrödinger operator we are going to work with in Section~\ref{sec:gen-setting}, our first main results, Theorem~\ref{thm:mainresult} and Proposition~\ref{prop:accumulation}, are presented in Section~\ref{sec:plei-schr}; these describe the set accumulation points of the sequence $(\frac{\nu_n}{n})_{n\in\mathbb N}$ for the aforementioned Schrödinger operators; in particular, these points of accumulation always form a finite subset of $(0,1]$, and can be described explicitly in terms of the edge lengths of the graph. The proofs rely upon a series of lemmata that are discussed in Subsection~\ref{sec:proofs}.

Our results can be significantly refined if we restrict to the case of Laplacians with natural vertex conditions. We delve into this setting in Section~\ref{sec:plei-lapl-nat}; see Theorem~\ref{thm:standard} in particular, where among other things we show that on graphs with rationally dependent edge lengths $(\frac{\nu_n}{n})_{n\in\mathbb N}$ will always have at least two points of accumulation, one strictly smaller than $1$. In this section we also give a simple example, {Example~\ref{ex:4-star-choice}} demonstrating that even the set of points of accumulation of $(\frac{\nu_n}{n})_{n\in\N}$ itself may actually depend on the choice of eigenbasis, even for the Laplacian with natural conditions, reinforcing the principle that many features of interest are lost if we restrict to the ``generic'' setting where all eigenvalues are simple.

With the aim of showing the flexibility of our approach -- which, unlike that of~\cite{GnuSmiWeb04} does not rely on global linear algebraic manipulations, but rather on isoperimetric inequalities applied locally to the nodal domains -- in the last part of the paper we turn to an important \textit{nonlinear} operator. Section~\ref{sec:pleijel-p} is devoted to the theory of $p$-Laplacian, and to obtaining a Pleijel-type theorem in this context, Theorem~\ref{thm:pleijel-p}. On metric graphs, these nonlinear operators were introduced in~\cite[Section 6.7]{Mug14} and their spectral properties were studied in \cite{DelRos16,BerKenKur17}; their theory is significantly less developed than in the case of intervals, though. In turn, the theory of general $p$-Laplace operators is not as well understood as in the linear case of $p=2$: even the existence of infinitely many eigenvalues or the validity of the Unique Continuation Property seem to be unknown in general. A Courant-type Nodal Domain Theorem on domains was proved in~\cite{DraRob02}, though; it yields an upper bound $\nu_{n,p}\le 2n-2$, which can be refined to $\nu_{n,p}\le n$ under the additional assumption that the unique continuation principle prevails. Results {of Sturm Oscillation type} for $p$-Laplacians with potential in one dimension were obtained in~\cite{ReiWal99,BinDra03}. It is all the more interesting that our Theorem~\ref{thm:pleijel-p} implies in particular the sharper bound $\nu_{n,p}\le n$ even in an environment where the Unique Continuation Property clearly fails.

We include a number of auxiliary results about the $p$-Laplacian on metric graphs and its eigenvalues in Appendix~\ref{sec:p-weyl}, where, in particular, we give the result that its variational eigenvalues satisfy the same Weyl asymptotics as the $p$-Laplacian on an interval. Finally, in Appendix~\ref{app:lambda1-est}, we give a bound on the first eigenvalue of a general (linear) Schrödinger operator in terms of the $L^1$-norm of its potential and the average edge length of $\Graph$ only, reminiscent of \cite{KenKurMal16}, and needed for the proofs in Section~\ref{sec:plei-schr}; despite being rather elementary, it may be of some independent interest.

\section{General setting}\label{sec:gen-setting}

Let \(\mathcal G\) be a \textit{compact metric graph}, i.e., a finite combinatorial graph $\mG=(\mV,\mE)$ each of whose edges $\me\in\mE$ is identified with an interval $(0,\ell_\me)$ of finite length; we write $|\mathcal G| = \sum_{\me \in \mE} \ell_\me$ for the total length of the graph. See~\cite{Mug19} for a precise definition of this metric measure space and the function spaces $C(\mathcal G)$ and $L^2(\mathcal G)$, as well as the Sobolev space $H^1(\mathcal G)$ of all continuous functions over $\mathcal G$ that are edgewise weakly differentiable with weak derivative in $L^2(\mathcal G)$.

In what follows we will give a description of the operators we will be considering in what follows. Note that all we will need for the results there are certain more or less abstract properties which these operators satisfy; in particular, the reader unfamiliar with the theory presented in this section may imagine the important special case of a Schrödinger operator with smooth (or even zero) potential and (possibly) delta couplings, or else any of the usual vertex conditions, at the vertices.

We first consider a possible relaxation of the continuity condition at the vertices to allow for \textit{weighted} continuity encoded in a nonnegative vector of edge weights $w_\mv\in \R^{\deg(\mv)}_+$, $\mv\in\mV$, i.e.,
\begin{equation}\label{eq:wei-cnd}
w_{\me,\mv} f_\me(\mv)=w_{\mf,\mv} f_\mf(\mv)\quad \hbox{if }\mv\in \me\cap \mf .
\end{equation}
Indeed, in this case we can define, in a natural way, a space $H^1_w(\mathcal G)$ of edgewise $H^1$-functions that satisfy~\eqref{eq:wei-cnd} at the vertices and repeat the above reasoning for the operator $\Op (q,\VM ,w)$ thus arising. Note that while functions in $H^1_w(\mathcal G)$ may be discontinuous at the vertices, they can only change sign at a vertex, i.e., take on positive and negative values in any neighbourhood of a vertex, if they are zero at that vertex.

We then define, for $q\in L^1(\mathcal G)$ and a family of matrices $\VM \in M_{2|\mE|\times 2|\mE|}(\mathbb C)$ the sesquilinear form
\begin{equation}\label{eq:form-2}
\begin{split}
\form (f) &:=\int_{\mathcal G} \left(|f'(x)|^2 + q(x)|f(x)|^2\right) \textrm{d}x + \sum_{\mv,\mw\in\mV} \left(\VM_{\mv\mw} f(\mw),f (\mv)\right)_{\mathbb C^{\deg(\mv)}},\\
\formdomain[\form]&:=H^1_w(\mathcal G),
\end{split}
\end{equation}
where for each $\mv,\mw\in \mV$, $\VM_{\mv\mw}$ is a $\deg(\mv)\times \deg(\mw)$-matrix, and for \(\mv\in\mV\), \(f(\mv)=(f_\me(\mv))_{\me\in\mE_\mv}\). If we want to emphasise the dependence on the potential and the vertex conditions, then we will also write $\form_{q,\VM,w}$ in place of just $\form$. At any rate, it follows from the theory presented in \cite[Section~6.5]{Mug14} that this form is bounded and elliptic; hence the associated operator $\Op = \Op (q,\VM,w)$ is (minus) the generator of an analytic, strongly continuous semigroup on $L^2(\mathcal G)$. This semigroup is of trace class and therefore $\Op$ has pure point spectrum.

If in particular $q$ is real-valued and $\VM$ is Hermitian for all $\mv\in\mV$, then $\form$ is a closed quadratic form, hence $\Op(q,\VM,w)$ is a self-adjoint operator that is bounded from below. This setting includes, as special cases, realisations of the Laplacian on $\mathcal G$ with so-called \textit{natural vertex conditions} (continuity across vertices, all normal derivatives sum up to 0 at each vertex), corresponding to $q \equiv 0$, $\VM = 0$ and $w \equiv 1$, as well as (standard) delta couplings (continuity across vertices, at each vertex the sum of all normal derivatives equals minus the point evaluation at the same vertex), where $q \equiv 0$, $w \equiv 1$ and $\VM$ is a diagonal matrix.
 
Now it is known, cf.~\cite[Theorem~6.71]{Mug14}, that the semigroup is positive if and only if so is the semigroup generated by each $-\VM$ (this is in particular the case if $\VM$ is diagonal, which covers delta couplings, including weighted versions thereof). In this context, we refer to the condition \eqref{eq:wei-cnd} and the weighted Kirchhoff--Robin-type condition associated with the matrix $\VM$ collectively as \textit{positivity preserving} vertex conditions. Finally, all these assertions remain valid if, for some $\mV_0\subset \mV$ (where possibly, trivially, $\mV_0 = \emptyset$), we consider the operator $\Op(q,\VM,w,\mV_0)$ associated with the restriction of the form $a$ to $H^1_{0,w}(\mathcal G;\mV_0)$, the space of all functions in  $H^1_w(\mathcal G)$ that vanish on the vertices in $\mV_0$ {(in this case, of course, we only require that $\VM$ be defined for $\mv \in \mV \setminus \mV_0$).}

The Schrödinger operators associated with these classes of forms were thoroughly studied in~\cite{Kur19}.

In all these cases, the discrete spectrum of $\Op(q,\VM,w,\mV_0)$ consists of real eigenvalues $\lambda_n (q, A, w,\mV_0)$ repeated according to their finite multiplicities, characterised by the usual Courant--Fischer max-min and min-max principles, which diverge to $+\infty$ as $n \to \infty$, and whose eigenfunctions may be chosen to be real and to form an orthonormal basis $(\psi_n)_{n\in\mathbb N}$ of $L^2(\mathcal G)$. (We mostly avoid this heavy notation and simply write $\lambda_n(\mathcal G):= \lambda_n (q, \VM, w,\mV_0)$.)

If additionally $\mathcal G$ is connected, $\VM$ is diagonal, and for a given (and possibly empty) $\mV_0\subset \mV$, the set $\Graph \setminus \mV_0$ is still connected, then the semigroup generated by $\Op(q,\VM,w,\mV_0)$ is even irreducible (see~\cite[Proposition~3.7]{Mug07} for the special case of $q\equiv1$; the proof is identical in the general case), hence by the Kre\u{\i}n--Rutman Theorem we deduce that the first eigenspace is one-dimensional and spanned by a positive function (the Perron eigenfunction) $\psi_1$: i.e., $\psi_1(x)>0$ for a.e.\ $x\in \mathcal G$. Indeed, more holds: it was proved in~\cite{Kur19} that a strong maximum principle holds, namely the Perron eigenfunction vanishes only at the vertices in $\mV_0$.
In particular, the form domain contains the set 
\begin{equation}\label{eq:h10}
H^1_0 (\mathcal G) := H^1_0(\mathcal G; \mV)
\end{equation} 
of globally $H^1$-functions which vanish at \emph{all} vertices.

We will denote by $\lambda_n^D = \lambda_n^D (\mathcal G)$ the $n$-th lowest eigenvalue (counting multiplicities) of the Schrödinger operator with potential $q$ and Dirichlet conditions at \emph{all} vertices of $\mathcal G$, that is, whose form domain is $H^1_0(\mathcal G)$; in this case the graph decomposes into a disjoint collection of intervals, moreover, the associated sesquilinear form is exactly \eqref{eq:form-2} restricted to $H^1_0 (\mathcal G)$. We note the following eigenvalue interlacing result for future reference.

\begin{lemma}
\label{lem:interlacing}
With the above assumptions and notation, for all $n \geq |\mV| + 1$ we have
\begin{displaymath}
	\lambda_{n-|\mV|}^D (\mathcal G) \leq \lambda_n (\mathcal G) \leq \lambda_n^D (\mathcal G).
\end{displaymath}
\end{lemma}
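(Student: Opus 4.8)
The plan is to read off both inequalities from the Courant--Fischer min-max characterisation of the eigenvalues $\lambda_n(\mathcal G)$ recalled above, using that the Dirichlet form domain $H^1_0(\mathcal G)$ sits inside the form domain of $\form$ as a subspace of finite codimension on which the two forms agree.

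The first thing I would check is that $H^1_0(\mathcal G)=H^1_0(\mathcal G;\mV)$ is contained in the form domain $H^1_{0,w}(\mathcal G;\mV_0)$ of $\form$, and that the two forms coincide there: for $f\in H^1_0(\mathcal G)$ one has $f(\mv)=0$ at every vertex, so the weighted-continuity condition \eqref{eq:wei-cnd} holds trivially and all the vertex terms $(\VM_{\mv\mw}f(\mw),f(\mv))$ in \eqref{eq:form-2} vanish; hence $\form(f)=\int_{\mathcal G}(|f'|^2+q|f|^2)\dx$ is exactly the Dirichlet form, whose $n$-th eigenvalue is $\lambda_n^D(\mathcal G)$. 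The upper bound $\lambda_n(\mathcal G)\le\lambda_n^D(\mathcal G)$ then follows at once by restricting the minimisation
\[
\lambda_n(\mathcal G)=\min_{\substack{V\subseteq H^1_{0,w}(\mathcal G;\mV_0)\\ \dim V=n}}\ \max_{0\ne f\in V}\frac{\form(f)}{\|f\|_{L^2(\mathcal G)}^2}
\]
to the smaller family of $n$-dimensional subspaces $V\subseteq H^1_0(\mathcal G)$: this can only increase the minimum, and on such $V$ the Rayleigh quotient is that of the Dirichlet form, so the restricted value equals $\lambda_n^D(\mathcal G)$.

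For the lower bound I would run a codimension argument. The key observation is that $H^1_0(\mathcal G)$ is cut out of $H^1_{0,w}(\mathcal G;\mV_0)$ by at most $|\mV|$ linear conditions -- namely the vanishing of the (weighted) vertex value at each $\mv\in\mV$, which by \eqref{eq:wei-cnd} amounts to a single scalar functional per vertex. Choosing an $n$-dimensional subspace $V$ that realises $\lambda_n(\mathcal G)$ in the min-max above, the intersection $V\cap H^1_0(\mathcal G)$ then has dimension at least $n-|\mV|$ and contains an $(n-|\mV|)$-dimensional subspace on which $\form$ agrees with the Dirichlet form and the Rayleigh quotient is still $\le\lambda_n(\mathcal G)$. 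Feeding this subspace into the min-max for the Dirichlet problem gives $\lambda_{n-|\mV|}^D(\mathcal G)\le\lambda_n(\mathcal G)$, which makes sense precisely for $n\ge|\mV|+1$.

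Both steps are routine once the min-max is in hand; the only point that needs a little care is the dimension bookkeeping in the last paragraph, i.e.\ verifying that passing to $H^1_0(\mathcal G)$ really costs at most $|\mV|$ conditions (using weighted continuity to reduce the per-vertex constraint to one scalar, and noting that vertices already in $\mV_0$ impose nothing new). I do not expect a genuine obstacle here; alternatively one could phrase the whole argument through the dual max-min formulation, but the subspace-intersection version seems cleanest.
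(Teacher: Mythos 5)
Your proposal is correct and follows essentially the same route as the paper: the upper bound from the inclusion $H^1_0(\mathcal G)\subset \formdomain[\form]$ together with the agreement of the forms there, and the lower bound from the fact that the quotient $\formdomain[\form]/H^1_0(\mathcal G)$ has dimension at most $|\mV|$ (one scalar condition per vertex, thanks to the weighted continuity), fed into the min-max characterisation. The paper states this in one sentence; your write-up simply makes the same dimension bookkeeping explicit.
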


\begin{proof}
Both inequalities are an immediate consequence of the min-max characterisation of the respective eigenvalues and the fact that the forms agree on $H^1_0 (\mathcal G)$, the latter in conjunction with the inclusion of the form domains $H^1_0 (\mathcal G) \subset \formdomain[\form]$, the former in conjunction with the fact that the quotient space $\formdomain[\form] / H^1_0 (\mathcal G)$ is at most $|\mV|$-dimensional (cf.\ also \cite[Section~4.1]{BeKeKuMu19}).
\end{proof}

\section{Pleijel's theorem for Schrödinger operators on metric graphs}\label{sec:plei-schr}

Our main result is a variation of Pleijel's theorem for metric graphs. We impose the following assumptions throughout this section.

\begin{assumption}\label{assum:basic}
$\mathcal G$ is a compact, connected metric graph with underlying combinatorial graph $\mG=(\mV,\mE)$ and edge lengths $\ell_\me$, $\me\in\mE$; we set $\ell_{\min} := \min_{\me \in \mE} \ell_\me$. We also fix a (possibly empty) set $\mV_0\subset \mV$ and a potential $0 \leq q\in L^1(\mathcal G)$, and suppose $\VM$ is a Hermitian $2|\mE|\times 2|\mE|$-matrix such that the semigroup $(e^{-t\VM})_{t\ge 0}$ is positive and $(w_\mv)_{\mv\in\mV}\in \R^{2|\mE|}$ is a vector such that $w_\mv\in {\R}^{\deg(\mv)}_+$ for all {$\mv\in\mV \setminus \mV_0$.}
\end{assumption}

{We recall that $(e^{-t\VM})_{t\ge 0}$ is positive if and only if all entries of $\VM$ are real and all off-diagonal entries are non-positive; this includes all (real) delta coupling conditions.}

Under these assumptions, we will consider the operator associated with the form $\form_{q,\Op,w}$ introduced in Section~\ref{sec:gen-setting}. In this section we fix once and for all an (\textit{a priori} arbitrary) eigenbasis of this operator.

\begin{definition}
Let $(\psi_n)_{n\in\mathbb N}$ be an orthonormal sequence of eigenfunctions $(\psi_n)_{n\in\N}$ with associated eigenvalues $(\lambda_n)_{n\in\N}$ of the Schrödinger operator $\Op(q,\VM,w,\mV_0)$ associated with the form $\form_{q,\VM,w}$. As already mentioned in the introduction, the \textit{nodal domains} of any eigenfunction $\psi_k$ are the respective closures in the metric space $\mathcal G$ of connected components of the sets $\{\psi_k \neq 0 \}$. We occasionally denote by $\mathcal G_1,\ldots,\mathcal G_{\nu_k}$ the nodal domains themselves, and by $\partial \mathcal G_i$ the boundary of $\mathcal G_i$ in $\mathcal G$, i.e., 
\[
\partial \mathcal G_i:=\overline{\mathcal G_i}\cap \bigcup_{j\ne i}\overline{\mathcal G_i}.
\] 
We denote the nodal count of this sequence by $(\nu_n)_{n\in\N}$.
\end{definition}

\textit{A priori} the sequence \(\nu_k\in\N\), including the points of accumulation, can depend on the precise choice of basis, see Example~\ref{ex:4-star-choice} below, unless suitable assumptions on the edge lengths $(\ell_\me)_{\me\in\mE}$ and the graph topology are imposed that force all eigenvalues to be simple. 

Furthermore, here and throughout, given a sequence $(a_n)_{n\in\mathbb N} \subset \R$, we will write
	\[\acc\left\{a_n : n\in\N\right\}\]
to denote its set of points of accumulation. With this we are now ready to formulate our first main theorem.

\begin{theorem}
\label{thm:mainresult}
For all quantum graphs satisfying Assumption~\ref{assum:basic}, the nodal count $(\nu_n)_{n\in\N}$ satisfies
\begin{equation}
\label{eq:accumulation}
	\acc \left\{ \tfrac{\nu_n}{n} : n \in \N \right\} \subset \left\{ \frac{\sum_{\me \in \mE_0} \ell_\me}{|\mathcal G|} : \mE \supset \mE_0
	\text{ is a nonempty set of edges} \right\}.
\end{equation}
In particular, $\acc \left\{ \tfrac{\nu_n}{n} : n \in \N \right\}$ is a finite set, and
\begin{equation}
\label{eq:estimates}
	0 < \frac{\ell_{\min} }{|\mathcal{G}|} \leq \liminf_{n \to \infty} \frac{\nu_n}{n} \leq \limsup_{n \to \infty} \frac{\nu_n}{n} \leq 1.
\end{equation}
\end{theorem}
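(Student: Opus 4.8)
The plan is to pin down the asymptotics of $\nu_n$ precisely enough to read off \eqref{eq:accumulation}, from which \eqref{eq:estimates} then follows with no extra work. The key structural observation is that, since any eigenfunction vanishing on an open subset of an edge vanishes on the whole edge, the support of $\psi_n$ is always a union of \emph{entire} edges; write $\mE_0(n)\subset\mE$ for this (necessarily nonempty, as $\psi_n\not\equiv 0$) set of edges and $L_n:=\sum_{\me\in\mE_0(n)}\ell_\me$ for its total length. The goal is to establish
\begin{equation}\label{eq:plan-main}
\frac{\nu_n}{n}=\frac{L_n}{|\mathcal G|}+o(1)\qquad(n\to\infty).
\end{equation}
Once \eqref{eq:plan-main} is in hand, the theorem is immediate: $\mE_0(n)$ ranges over the finitely many nonempty subsets of $\mE$, so the numbers $L_n/|\mathcal G|$ lie in the finite set on the right-hand side of \eqref{eq:accumulation}, whence every accumulation point of $(\nu_n/n)$ lies there too. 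Since each nonempty sub-sum satisfies $\ell_{\min}\le L_n\le|\mathcal G|$, that set is contained in $[\ell_{\min}/|\mathcal G|,1]$; as $(\nu_n/n)$ is then bounded, its $\liminf$ and $\limsup$ are themselves accumulation points, giving \eqref{eq:estimates}.

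Two ingredients feed into \eqref{eq:plan-main}. The first is a Weyl law for $\lambda_n$: the operator with Dirichlet conditions at all vertices decouples $\mathcal G$ into the intervals $(0,\ell_\me)$, whose spectra are essentially explicit, so its counting function equals $\tfrac{\sqrt\lambda}{\pi}|\mathcal G|+o(\sqrt\lambda)$; Lemma~\ref{lem:interlacing} transfers this to $\mathcal G$ up to the bounded shift $|\mV|$, yielding $n=\tfrac{\sqrt{\lambda_n}}{\pi}|\mathcal G|(1+o(1))$. The second is a count of nodal domains by their lengths. The nodal domains partition the support into subgraphs $\mathcal G_1,\dots,\mathcal G_{\nu_n}$, on each of which $\psi_n$ is a positive Dirichlet ground state, so $\lambda_1^D(\mathcal G_i)=\lambda_n$. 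Applying Nicaise's isoperimetric inequality locally to each $\mathcal G_i$ (in the sharp form incorporating the $L^1$-potential correction from Appendix~\ref{app:lambda1-est}) bounds $\lambda_1^D(\mathcal G_i)$ below by a multiple of $|\mathcal G_i|^{-2}$, forcing $|\mathcal G_i|\gtrsim 1/\sqrt{\lambda_n}$; moreover, for those $\mathcal G_i$ contained in the interior of a single edge the domain is a genuine Dirichlet subinterval, so $|\mathcal G_i|=\pi/\sqrt{\lambda_n}+o(1/\sqrt{\lambda_n})$. Since only $O(1)$ nodal domains meet a vertex (there being boundedly many edge-ends), and since for large $n$ no single edge can be swallowed by one domain (a Sturm argument forces every support edge to carry at least one interior zero), all but $O(1)$ of the $\mathcal G_i$ are such interior intervals, and together they fill $\mathcal G$ over $\mE_0(n)$ up to a residual length $O(1/\sqrt{\lambda_n})$. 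Counting lengths gives $\nu_n\cdot\tfrac{\pi}{\sqrt{\lambda_n}}=L_n+o(1)$, that is,
\begin{equation}\label{eq:plan-count}
\nu_n=\frac{\sqrt{\lambda_n}}{\pi}\,L_n+o(\sqrt{\lambda_n}).
\end{equation}
Dividing \eqref{eq:plan-count} by $n=\tfrac{\sqrt{\lambda_n}}{\pi}|\mathcal G|(1+o(1))$ and using $\sqrt{\lambda_n}\to\infty$ yields \eqref{eq:plan-main}.

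The main obstacle is the uniform count \eqref{eq:plan-count}, and the difficulty is genuinely threefold. First, the potential is merely $L^1$, so the statement ``interior nodal domains have length $\pi/\sqrt{\lambda_n}+o(1/\sqrt{\lambda_n})$'' must survive a rough $q$, with error terms that are summable and uniform in $n$; this is exactly where the first-eigenvalue estimate of Appendix~\ref{app:lambda1-est}, controlling the potential by $\|q\|_{L^1}$ and the edge lengths, does the work. Second, the general positivity-preserving vertex conditions allow a nodal domain to straddle several edges through a vertex where $\psi_n\ne 0$, so one must verify that the number of such vertex-adjacent domains, and their total length, remain $O(1)$ and $O(1/\sqrt{\lambda_n})$ respectively, independently of $n$. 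Third, one must rule out miscounting of short domains, which is precisely the function of the local isoperimetric (Nicaise) lower bound. Everything beyond these points is bookkeeping: with \eqref{eq:plan-count} and the Weyl law established, \eqref{eq:accumulation} and \eqref{eq:estimates} follow as above.
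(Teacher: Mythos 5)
Your proposal is correct and follows essentially the same route as the paper: the Weyl law via Dirichlet decoupling and interlacing, the identification $\lambda_n(\mathcal G)=\lambda_1(\mathcal G_j)$ on each nodal domain combined with the Nicaise-type isoperimetric bound to show that all but $O(1)$ nodal domains are Dirichlet subintervals of length $\pi/\sqrt{\lambda_n}\,(1+O(1/\sqrt{\lambda_n}))$, and the edgewise unique continuation principle restricting $|\{\psi_n\neq 0\}|$ to sums of full edge lengths. Your intermediate asymptotic $\nu_n=\tfrac{\sqrt{\lambda_n}}{\pi}L_n+o(\sqrt{\lambda_n})$ is exactly the paper's Lemma~\ref{lem:nu-lambda}, and the rest of the bookkeeping matches Proposition~\ref{prop:accumulation} and Lemma~\ref{lem:values-of-psin}.
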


While the right-hand side of \eqref{eq:accumulation} does not depend on the parameters $q,\VM,w$, the set inclusion in \eqref{eq:accumulation} is sharp in the case of a graph consisting of just one interval. Indeed, recall that on intervals, in the case of Sturm--Liouville problems, $\nu_n = n$ for all $n \in \N$ (see~\cite{Hin05}).

As mentioned in the introduction, the key driving force behind the potential appearance of a non-trivial set of points of accumulation of $\frac{\nu_n}{n}$ between $0$ and $1$ here is the failure of the unique continuation principle, as evidenced by the following characterisation.

\begin{proposition}
\label{prop:accumulation}
Under Assumption~\ref{assum:basic} we have
\begin{equation}
\label{eq:accumulation-eigenfunctions}
	\acc \left\{ \tfrac{\nu_n}{n} : n \in \N \right\} = \acc \left\{ \frac{|\{\psi_n \neq 0\}|}{|\mathcal G|} : n \in \N \right\}.
\end{equation}
\end{proposition}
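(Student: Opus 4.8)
The plan is to prove the stronger, pointwise statement that
\[
\frac{\nu_n}{n} - \frac{|\{\psi_n \neq 0\}|}{|\mathcal G|} \longrightarrow 0 \qquad (n \to \infty);
\]
since both sequences take values in the bounded set $[0,1]$ (cf.\ Theorem~\ref{thm:mainresult}), and two bounded sequences whose difference tends to $0$ have identical sets of accumulation points, \eqref{eq:accumulation-eigenfunctions} follows at once.

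First I would count nodal domains edge by edge. Fix $n$ and call an edge $\me$ \emph{active} if $\psi_n$ does not vanish identically on it; since $\psi_n$ solves the Schr\"odinger ODE $-\psi_n'' + q\psi_n = \lambda_n \psi_n$ on each edge, on every active edge it has only finitely many zeros, so the zero set has measure zero on the union of active edges and $|\{\psi_n \neq 0\}| = \sum_{\me \text{ active}} \ell_\me$. The zeros partition each active edge into maximal subintervals of constant sign, which I call \emph{pieces}; let $P_n$ be their total number. The nodal domains are obtained from the pieces by gluing together those that meet at a vertex $\mv$ with $\psi_n(\mv) \neq 0$, whereas vertices with $\psi_n(\mv)=0$ (and the Dirichlet vertices in $\mV_0$) separate pieces. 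Each gluing lowers the component count, and the total reduction is bounded by $\sum_{\mv\in\mV}\deg(\mv) = 2|\mE|$, independently of $n$; hence $|\nu_n - P_n| \leq 2|\mE|$.

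Next I would pin down $P_n$ asymptotically. On each active edge a standard Pr\"ufer-angle (Sturm oscillation) analysis for $-\psi_n'' + q\psi_n = \lambda_n\psi_n$ shows that the number of interior zeros equals $\tfrac{\sqrt{\lambda_n}}{\pi}\ell_\me + O(1)$, where the $O(1)$ is uniform in $n$ because the contribution of the $L^1$-potential to the total variation of the Pr\"ufer angle is controlled by $\|q\|_{L^1}/\sqrt{\lambda_n} \to 0$. Summing over the at most $|\mE|$ active edges gives $P_n = \tfrac{\sqrt{\lambda_n}}{\pi}|\{\psi_n\neq 0\}| + O(1)$, and combining with the previous step, $\nu_n = \tfrac{\sqrt{\lambda_n}}{\pi}|\{\psi_n\neq 0\}| + O(1)$. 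Finally I would invoke the Weyl asymptotics for the eigenvalue counting function on $\mathcal G$, namely $N(\lambda) = \tfrac{|\mathcal G|}{\pi}\sqrt{\lambda} + O(1)$, which yields $\sqrt{\lambda_n} = \tfrac{\pi n}{|\mathcal G|} + O(1)$; substituting and using $|\{\psi_n\neq 0\}| \leq |\mathcal G|$ to absorb the cross term gives $\nu_n = \tfrac{|\{\psi_n\neq0\}|}{|\mathcal G|}\,n + O(1)$, whence $\tfrac{\nu_n}{n} - \tfrac{|\{\psi_n\neq0\}|}{|\mathcal G|} = O(1/n) \to 0$, as required.

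The main obstacle is the second step: obtaining the per-edge zero count with the \emph{exact} leading constant $\sqrt{\lambda_n}/\pi$ together with a remainder bounded uniformly in $n$ and in the varying boundary data $\psi_n(\mv)$ at the edge endpoints, so that the accumulated error over all edges stays $O(1)$ rather than growing with $n$. This is precisely where the $L^1$-regularity of $q$ and the uniform control of the Weyl remainder for quantum graphs are essential; once these quantitative oscillation and counting estimates are in hand (as furnished by the lemmata of Subsection~\ref{sec:proofs}), the equality of the two accumulation sets is immediate.
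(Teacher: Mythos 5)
Your argument is correct and yields the same key asymptotic identity $\nu_n = \frac{|\{\psi_n\neq 0\}|}{|\mathcal G|}\,n + o(n)$ as the paper (its equation \eqref{eq:accumulation-eigenfunctions-o}), from which the equality of accumulation sets follows exactly as you say; but you reach it by a genuinely different route. The paper never counts zeros edge by edge: instead it observes that $\lambda_n(\mathcal G)$ is the \emph{first} eigenvalue of each nodal domain $\mathcal G_j$ (Lemma~\ref{lem:first-eigenvalue}, via a Kre\u{\i}n--Rutman/positivity argument), then applies the isoperimetric bound of Proposition~\ref{prop:lambda1-est} to each $\mathcal G_j$ to squeeze $|\mathcal G_j|$ between $\pi/\lambda_n^{1/2}$ and $\pi/(\lambda_n^{1/2}-\|q\|_1)$ for all but at most $|\mV|$ exceptional domains, and sums $|\{\psi_n\neq 0\}| = \sum_j |\mathcal G_j|$ to get Lemma~\ref{lem:nu-lambda}. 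Your Pr\"ufer-angle count of interior zeros per active edge, with the gluing correction of at most $2|\mE|$ at the vertices, is a legitimate substitute for Lemma~\ref{lem:nu-lambda} in the linear Schr\"odinger setting, and the uniformity of the $O(1)$ error in the boundary data and in $\|q\|_{L^1}/\sqrt{\lambda_n}$ is indeed the standard Sturm oscillation estimate. What the paper's route buys is robustness: the same ``first eigenvalue of a nodal domain plus isoperimetric bound'' scheme transfers verbatim to the $p$-Laplacian in Section~\ref{sec:pleijel-p}, where a Pr\"ufer analysis is less readily available, and it handles the general positivity-preserving vertex couplings without ever analysing the ODE on an edge. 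Two minor caveats on your version: citing Theorem~\ref{thm:mainresult} for boundedness of $\nu_n/n$ is circular (that theorem is deduced from this proposition), though unnecessary --- boundedness of $\frac{|\{\psi_n\neq0\}|}{|\mathcal G|}$ plus the vanishing difference already forces the two accumulation sets to coincide; and your claimed Weyl remainder $\sqrt{\lambda_n} = \frac{\pi n}{|\mathcal G|} + O(1)$ is only established in the paper for $q\in L^\infty$ (see \eqref{eq:better-weyl}), whereas for $q\in L^1$ Lemma~\ref{lem:weyl} gives only $\sqrt{\lambda_n} = \frac{\pi n}{|\mathcal G|} + o(n)$; this weakens your final rate from $O(1/n)$ to $o(1)$, which is still all that is needed.
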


Note in particular that the points of accumulation of $\frac{\nu_n}{n}$ are invariant under uniform rescaling of all edges of the graph. 

 The proof of Theorem~\ref{thm:mainresult} and Proposition~\ref{prop:accumulation} is based on the following principles, the proofs of which, in turn, are postponed to Subsection~\ref{sec:proofs}.

\begin{lemma}[Weyl asymptotics]
\label{lem:weyl}
We have
\begin{equation}
\label{eq:weyl}
	\lambda_n (\mathcal G) = \frac{\pi^2}{|\mathcal G|^2} n^2 + o(n^2) \qquad \text{as } n \to \infty.
\end{equation}
\end{lemma}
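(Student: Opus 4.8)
The plan is to deduce \eqref{eq:weyl} from the interlacing bounds of Lemma~\ref{lem:interlacing}, which sandwich $\lambda_n(\mathcal{G})$ between (shifted) Dirichlet eigenvalues, together with an explicit Weyl law for the \emph{decoupled} Dirichlet problem. Since the index shift $n \mapsto n-|\mV|$ is a fixed finite constant, $\frac{\pi^2}{|\mathcal{G}|^2}(n-|\mV|)^2 = \frac{\pi^2}{|\mathcal{G}|^2}n^2 + o(n^2)$, so by Lemma~\ref{lem:interlacing} it suffices to show
\[
	\lambda_n^D(\mathcal{G}) = \frac{\pi^2}{|\mathcal{G}|^2}\,n^2 + o(n^2).
\]
Under Dirichlet conditions at all vertices the form \eqref{eq:form-2} restricted to $H^1_0(\mathcal{G})$ decouples over the edges (the vertex term vanishes since functions in $H^1_0(\mathcal{G})$ are zero at every vertex), so the Dirichlet operator is the orthogonal sum over $\me \in \mE$ of the one-dimensional Schr\"odinger operators $-\tfrac{d^2}{dx^2} + q|_\me$ on $(0,\ell_\me)$ with Dirichlet endpoints; its spectrum is the union, counted with multiplicity, of the edgewise spectra.

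First I would treat the potential-free case $q \equiv 0$. Writing $\mu_n$ for the $n$-th eigenvalue of the free Dirichlet Laplacian on $\mathcal{G}$, the edgewise eigenvalues are $k^2\pi^2/\ell_\me^2$, $k \geq 1$, so the counting function is
\[
	\#\{n : \mu_n \leq \lambda\} = \sum_{\me \in \mE} \Big\lfloor \tfrac{\ell_\me \sqrt{\lambda}}{\pi} \Big\rfloor = \frac{|\mathcal{G}|}{\pi}\sqrt{\lambda} + O(1),
\]
using $\sum_{\me \in \mE} \ell_\me = |\mathcal{G}|$ and the finiteness of $\mE$. Inverting this relation gives $\mu_n = \frac{\pi^2}{|\mathcal{G}|^2}n^2 + O(n)$.

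Next I would remove the potential by a relative form-boundedness argument. For $f \in H^1_0(\mathcal{G})$ the edgewise bound $\|f\|_{L^\infty(\mathcal{G})}^2 \leq 2\|f\|_{L^2(\mathcal{G})}\|f'\|_{L^2(\mathcal{G})} \leq \varepsilon\|f'\|_{L^2(\mathcal{G})}^2 + \varepsilon^{-1}\|f\|_{L^2(\mathcal{G})}^2$, combined with $\int_{\mathcal{G}} q|f|^2 \leq \|q\|_{L^1}\|f\|_{L^\infty(\mathcal{G})}^2$, shows that for every $\varepsilon > 0$ there is a constant $C_\varepsilon$ with
\[
	\form_0(f) \leq \form_q(f) \leq (1+\varepsilon)\,\form_0(f) + C_\varepsilon \|f\|_{L^2(\mathcal{G})}^2,
\]
where $\form_0$ and $\form_q$ denote the Dirichlet form \eqref{eq:form-2} on $H^1_0(\mathcal{G})$ without and with the potential, the first inequality using $q \geq 0$. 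The min-max principle then yields $\mu_n \leq \lambda_n^D(\mathcal{G}) \leq (1+\varepsilon)\mu_n + C_\varepsilon$; dividing by $n^2$, inserting the free asymptotics, and letting $\varepsilon \to 0$ gives $\lambda_n^D(\mathcal{G}) = \frac{\pi^2}{|\mathcal{G}|^2}n^2 + o(n^2)$. Feeding this back into Lemma~\ref{lem:interlacing} completes the proof.

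The main obstacle is the potential term: since $q$ is merely $L^1$ it cannot be handled as a bounded perturbation, and the crux is the interpolation inequality on each interval that renders $q$ form-bounded \emph{with relative bound zero}, so that its contribution is absorbed into the factor $(1+\varepsilon)$ and an additive constant, and is therefore invisible at the $n^2$ scale. Finiteness of $\mE$ is what keeps the constants uniform across edges and the $O(1)$ and $C_\varepsilon$ terms of order $o(n^2)$.
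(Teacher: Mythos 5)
Your proof is correct and follows essentially the same route as the paper: both reduce \eqref{eq:weyl} via Lemma~\ref{lem:interlacing} to the Weyl asymptotics for the fully decoupled Dirichlet problem on the disjoint union of edges. The only difference is that the paper simply cites \cite[Lemma~2.1]{AtkMin87} for the Weyl law of Sturm--Liouville problems with $L^1$ coefficients on intervals, whereas you supply a self-contained proof of that ingredient by showing the $L^1$ potential is form-bounded with relative bound zero (via the interpolation inequality, which is in fact \eqref{eq:sobolev-embedding} in the appendix) --- a perfectly valid substitute.
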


\begin{lemma}[Relationship between $\nu_n$ and $\lambda_n$]
\label{lem:nu-lambda}
There exists $n_0 \in \N$ depending only on the metric graph $\mathcal G$ and the potential $0 \leq q \in L^1(\Graph)$ such that, for all $n \geq n_0$,
\begin{equation}
\label{eq:nu-lambda-estimates}
	|\{\psi_n \neq 0\}| \cdot \frac{\lambda_n (\mathcal G)^{1/2}-\|q\|_1}{\pi} - (2|\mE|-1)|\mV|
	\leq \nu_n \leq |\{\psi_n \neq 0\}| \cdot \frac{\lambda_n (\mathcal G)^{1/2}}{\pi} + |\mV|.
\end{equation}
In particular,
\begin{equation}
\label{eq:nu-lambda-asymptotics}
	\nu_n = |\{\psi_n \neq 0\}| \cdot \frac{\lambda_n (\mathcal G)^{1/2}}{\pi} + O(1) \qquad \text{as } \lambda_n (\mathcal G) \to \infty.
\end{equation}
\end{lemma}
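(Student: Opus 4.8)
The plan is to analyse each nodal domain separately and then sum, exploiting the fact that on a nodal domain $\mathcal G_i$ the restriction of $\psi_n$ is a ground state. First I would record the basic geometric identity $|\{\psi_n\neq 0\}| = \sum_{i=1}^{\nu_n}|\mathcal G_i|$: on every edge on which $\psi_n$ does not vanish identically the zero set is finite, since unique continuation holds \emph{within} a single edge (if $\psi_n$ vanishes on an open subset of an edge it vanishes on all of it), so the open nodal domains partition $\{\psi_n\neq0\}$ up to a null set. On each $\mathcal G_i$ the eigenfunction is positive (or negative) in the interior and vanishes on $\partial\mathcal G_i$, so it is the first eigenfunction of the operator obtained by imposing Dirichlet conditions on $\partial\mathcal G_i$ and retaining the original conditions at the interior vertices; in particular its lowest eigenvalue equals $\lambda_n$. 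I would use here the property noted in Section~\ref{sec:gen-setting} that an $H^1_w(\mathcal G)$-function changes sign at a vertex only if it vanishes there, which guarantees that the edge-pieces meeting at a vertex with $\psi_n\neq0$ all carry the same sign and hence lie in a common nodal domain.

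For the upper bound I would invoke Nicaise's isoperimetric inequality~\cite{Nic87}: for a connected metric graph $\Omega$ carrying Dirichlet conditions on its boundary, the first Dirichlet eigenvalue of the Laplacian on $\Omega$ satisfies $\mu_1(\Omega)\ge \pi^2/|\Omega|^2$. Since $q\ge 0$ only raises eigenvalues and the fixed Hermitian matrix $\VM$ contributes at most an $n$-independent shift (controlled by a trace estimate and absorbed once $n\ge n_0$), this yields $\lambda_n\ge \pi^2/|\mathcal G_i|^2$, i.e.\ $|\mathcal G_i|\ge \pi\lambda_n^{-1/2}$, for every nodal domain all of whose boundary points are genuine zeros. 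The only exceptions are the nodal domains reaching a degree-one vertex of $\mathcal G$ outside $\mV_0$ at which $\psi_n\neq0$, where the effective condition is of Neumann type and the constant degrades; there are at most $|\mV|$ of these. Writing $\nu_n=\#\{\text{good}\}+\#\{\text{bad}\}$ with $\#\{\text{bad}\}\le|\mV|$ and summing the length bound over the good domains against $\sum_i|\mathcal G_i|=|\{\psi_n\neq0\}|$ then gives $\nu_n\le |\{\psi_n\neq0\}|\,\lambda_n^{1/2}/\pi+|\mV|$.

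For the lower bound I would count nodal domains through the zeros of $\psi_n$ on the individual edges. A Prüfer-angle (Sturm) estimate for $-u''+qu=\lambda_n u$ shows that on each support edge $\me$ the number of interior zeros is at least $(\lambda_n^{1/2}-\|q\|_1)\ell_\me/\pi$ up to an additive constant, the potential entering only through $\|q\|_1$; summing over the support edges bounds the total number of edge-pieces below by $(\lambda_n^{1/2}-\|q\|_1)|\{\psi_n\neq0\}|/\pi$ up to $O(|\mE|)$. Each edge-piece lies in exactly one nodal domain, and passing from the number of pieces to the number of components of $\{\psi_n\neq0\}$ only loses the gluings at vertices where $\psi_n\neq0$ together with the independent cycles of the support subgraph; by an Euler-characteristic count this loss is a purely combinatorial quantity bounded by $(2|\mE|-1)|\mV|$. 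Combining the two estimates gives the stated lower bound, and \eqref{eq:nu-lambda-asymptotics} follows at once, since all correction terms are bounded independently of $n$ while $\lambda_n\to\infty$ by Lemma~\ref{lem:weyl}.

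The main obstacle I anticipate is exactly this global vertex bookkeeping in the lower bound: one must pass rigorously from the local count of edge-pieces to the number of connected components of $\{\psi_n\neq0\}$ on a graph that may contain cycles and vertices of high degree, while keeping the discrepancy uniformly bounded in $n$. The sign-consistency of $H^1_w$-functions at nonzero vertices is what makes this count well defined, and the Euler-characteristic argument for the support subgraph is what controls the effect of cycles; the remaining analytic input -- the uniform Sturm estimate with an $L^1$-potential and the absorption of the fixed $\VM$-shift -- is by comparison routine.
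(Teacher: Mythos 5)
Your overall architecture --- decompose $\{\psi_n\neq0\}$ into nodal domains, show each has length comparable to $\pi\lambda_n^{-1/2}$, and sum --- is the paper's, but both halves are executed differently. The paper combines Lemma~\ref{lem:first-eigenvalue} with the upper bound of Proposition~\ref{prop:lambda1-est} to show (Lemma~\ref{lem:nodal-size-est}) that every nodal domain has length at most $2\pi|\mE|/(\lambda_n^{1/2}-\|q\|_1)$; hence for $n\ge n_0$ each nodal domain contains at most one vertex, so all but at most $|\mV|$ of them are plain sub-intervals of edges with Dirichlet endpoints, for which the two-sided bound $\pi/\lambda_n^{1/2}\le|\Graph_j|\le\pi/(\lambda_n^{1/2}-\|q\|_1)$ is immediate (the upper half being Remark~\ref{rem:nodal-size-est}); summing yields both inequalities of \eqref{eq:nu-lambda-estimates} at once. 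Your lower bound via edgewise Pr\"ufer--Sturm oscillation plus an Euler-characteristic count of the gluings at nonzero vertices is a genuinely different and workable route, which bypasses Proposition~\ref{prop:lambda1-est} entirely; its cost is that you assert rather than verify that the combinatorial loss is exactly $(2|\mE|-1)|\mV|$. Any graph-dependent constant suffices for \eqref{eq:nu-lambda-asymptotics}, but not for the literal statement of \eqref{eq:nu-lambda-estimates}, so that bookkeeping would need to be carried out.

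The concrete gap is in your upper bound. The isoperimetric inequality $\mu_1(\Omega)\ge\pi^2/|\Omega|^2$ does \emph{not} hold for every connected metric graph with Dirichlet conditions on its nonempty boundary: it fails for domains attached to their Dirichlet set through essentially one point. For instance, a ``lasso''-shaped nodal domain --- a cycle of length $\ell$ plus a pendant piece of length $\varepsilon$ whose only zero of $\psi_n$ is at the tip, with natural conditions at the interior degree-three vertex --- has first eigenvalue $k_1^2$ with $k_1=\tfrac{\pi}{\ell}\bigl(1-\tfrac{4\varepsilon}{\ell}+o(\varepsilon)\bigr)<\tfrac{\pi}{\ell+\varepsilon}$, so $\mu_1<\pi^2/|\Omega|^2$. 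This domain is ``good'' in your classification, since your list of exceptions only covers domains touching a degree-one vertex of $\Graph$ where $\psi_n\neq0$. Similarly, absorbing the $\VM$-contribution into ``an $n$-independent shift'' replaces $\lambda_n^{1/2}$ by $(\lambda_n+C)^{1/2}$ and so does not literally produce the stated constant. Both problems disappear if you instead declare \emph{every} nodal domain containing a vertex of $\Graph$ in its interior to be bad --- there are still at most $|\mV|$ of these, since each vertex with $\psi_n(\mv)\neq0$ lies in exactly one nodal domain --- and apply the elementary interval bound $\lambda_n=\lambda_1(\Graph_j)\ge\pi^2/|\Graph_j|^2$ (valid because $q\ge0$ and no vertex conditions intervene) only to the remaining domains. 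That is precisely how the paper obtains the right-hand inequality.
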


These two lemmata are logically independent of each other; in particular, in \eqref{eq:nu-lambda-estimates} we explicitly do not use the Weyl asymptotics to estimate $\lambda_n$. Lemma~\ref{lem:weyl} in particular can be refined significantly for specific types of vertex conditions and potentials; for example, in the case of the Laplacian with natural vertex conditions and if $\mathcal G$ is not a cycle, then we may strengthen \eqref{eq:weyl} to
\begin{displaymath}
	\left(n - \frac{|N|+\beta}{2}\right)^2 \frac{\pi^2}{|\mathcal G|^2} \leq \lambda_n (\mathcal G) \leq \left(n-2+\beta + \frac{|N| + \beta}{2}\right)^2 \frac{\pi^2}{|\mathcal G|},
\end{displaymath}
where $|N|$ is the number of degree one vertices and $\beta$ is the first Betti number (number of independent cycles) of the graph, as follows from \cite[Theorems~4.7 and~4.9]{BerKenKur17}. More generally, if $q \in L^\infty (\mathcal G)$, then we may obtain the two-sided estimate
\begin{equation}
\label{eq:better-weyl}
	(n-c_1)^2 \frac{\pi^2}{|\mathcal G|^2} \leq \lambda_n (\mathcal G) \leq (n+c_2)^2 \frac{\pi^2}{|\mathcal G|} \qquad \text{for all } n \in \N
\end{equation}
for constants $c_1,c_2>0$ depending only on $\mathcal G$ and $\|q\|_\infty$; this is a consequence of Lemma~\ref{lem:interlacing} and a simple variational argument bounding $q$ in terms of the constant potential $\|q\|_\infty$, and zero.

Let us now show how Lemmata~\ref{lem:weyl} and~\ref{lem:nu-lambda} lead to the proofs of the main results. To prove Proposition~\ref{prop:accumulation}, if we combine \eqref{eq:nu-lambda-asymptotics} and \eqref{eq:weyl}, then we obtain the asymptotic behaviour
\begin{equation}
\label{eq:accumulation-eigenfunctions-o}
	\nu_n = \frac{|\{\psi_n \neq 0\}|}{|\mathcal G|} n + o(n) \qquad \text{as } n \to \infty.
\end{equation}
The other ingredient in the proof of Theorem~\ref{thm:mainresult} is the following ``weak'' unique continuation principle, whose proof will also be given in Subsection~\ref{sec:proofs}; Theorem~\ref{thm:mainresult} is a direct consequence of \eqref{eq:accumulation-eigenfunctions-o} and \eqref{eq:values-of-psin}.

\begin{lemma}[Possible values of $|\{\psi_n \neq 0\}|$]
\label{lem:values-of-psin}
For any vertex conditions under consideration and all $n \in \N$, we have
\begin{equation}
\label{eq:values-of-psin}
	\Big\{ |\{\psi_n \neq 0\}| : n \in \N \Big\} \subset \left\{ \sum_{\me \in \mE_0} \ell_\me : \mE_0 \subset \mE 
	\text{ is a nonempty set of edges} \right\}.
\end{equation}
\end{lemma}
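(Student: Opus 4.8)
The plan is to reduce \eqref{eq:values-of-psin} to a \emph{weak unique continuation principle} applied separately on each edge, the key point being that this principle is insensitive to the vertex conditions, the weights $w$, and the set $\mV_0$: all of these influence $\psi_n$ only at the vertices, a set of measure zero. First I would fix an eigenfunction $\psi_n$ with eigenvalue $\lambda_n$ and test the eigenvalue equation $\form(\psi_n,\phi)=\lambda_n(\psi_n,\phi)$ against functions $\phi$ supported in the interior of a single edge $\me$, so that every vertex contribution in \eqref{eq:form-2} drops out. This shows that $\psi_n$ solves
\[
	-\psi_n'' + q\,\psi_n = \lambda_n\,\psi_n
\]
weakly on the interior of each edge. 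Since $\psi_n$ is continuous, hence bounded, on the compact edge and $q\in L^1(\mathcal G)$, the right-hand side of $\psi_n''=(q-\lambda_n)\psi_n$ lies in $L^1$, so $\psi_n|_\me\in W^{2,1}(0,\ell_\me)$; in particular $\psi_n|_\me$ is $C^1$ and $\psi_n'|_\me$ is absolutely continuous.

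The heart of the argument is to show that on each edge either $\psi_n\equiv 0$ or the zero set $Z:=\{\psi_n=0\}\cap\me$ is Lebesgue-null. I would first establish uniqueness for the initial value problem: if $\psi_n(x_0)=\psi_n'(x_0)=0$ at some interior point $x_0$, then $\psi_n\equiv 0$ on that edge. This follows by rewriting the equation as the Volterra integral equation
\[
	\psi_n(x) = \int_{x_0}^{x} (x-t)\,\bigl(q(t)-\lambda_n\bigr)\,\psi_n(t)\,\mathrm{d}t
\]
and applying Gronwall's inequality in its integral form, which is valid for the $L^1$ kernel $t\mapsto \ell_\me\bigl(|q(t)|+|\lambda_n|\bigr)$ bounding $(x-t)(q(t)-\lambda_n)$; this forces $\psi_n\equiv 0$ on the whole edge (a connectedness argument, noting that the set where both $\psi_n$ and $\psi_n'$ vanish is open and closed, gives the same conclusion).

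With IVP uniqueness in hand, suppose $\psi_n|_\me\not\equiv 0$ yet $Z$ has positive measure. Almost every point of $Z$ is a Lebesgue density point; fixing such a point $x_0\in Z$, one has $\psi_n(x_0)=0$, and approximating $x_0$ by a sequence of points of $Z$ and using that $\psi_n$ is $C^1$ yields $\psi_n'(x_0)=\lim \frac{\psi_n(x_k)-\psi_n(x_0)}{x_k-x_0}=0$ as well. The uniqueness statement then forces $\psi_n\equiv 0$, a contradiction. Hence on each edge the zero set of a nontrivial $\psi_n$ is null, so $|\{\psi_n\neq 0\}\cap\me|$ equals $\ell_\me$ if $\psi_n|_\me\not\equiv 0$ and $0$ otherwise. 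Summing over edges gives $|\{\psi_n\neq 0\}|=\sum_{\me\in\mE_0}\ell_\me$ with $\mE_0:=\{\me\in\mE:\psi_n|_\me\not\equiv 0\}$, which is nonempty since $\psi_n$ is a nonzero eigenfunction, proving \eqref{eq:values-of-psin}.

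I expect the main technical obstacle to be the merely $L^1$ regularity of the potential: for smooth $q$ the zeros of a nontrivial solution are isolated and the measure-zero claim is immediate, whereas for $q\in L^1$ one cannot a priori exclude a zero set of positive measure with empty interior, so the density-point argument combined with the $L^1$-version of Gronwall's inequality is genuinely needed.
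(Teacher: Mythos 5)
Your proof is correct, and its skeleton is the same as the paper's: both reduce \eqref{eq:values-of-psin} to an edge-local weak unique continuation statement -- at an interior zero of $\psi_n$ either the derivative is nonzero or the function vanishes identically nearby -- obtained by testing the eigenvalue equation against functions compactly supported in the interior of a single edge, so that the vertex conditions, weights and $\mV_0$ become irrelevant. The difference lies in how that statement is justified: the paper simply invokes the strong maximum principle for Schrödinger equations with $L^1$ potentials from \cite{BerSmaTes15}, whereas you prove it from scratch by converting the initial value problem $\psi_n(x_0)=\psi_n'(x_0)=0$ into a Volterra integral equation and applying Gronwall's inequality with the $L^1$ kernel $\ell_\me\bigl(|q|+|\lambda_n|\bigr)$; you also make explicit the two steps the paper leaves to the reader, namely the $W^{2,1}$ (hence $C^1$) regularity of $\psi_n$ on each edge and the density-point argument showing that a zero set of positive measure yields an interior point where both $\psi_n$ and $\psi_n'$ vanish. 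Your version is therefore fully self-contained (and, as you note, the Gronwall bound already forces $\psi_n\equiv 0$ on the whole edge, so the clopen propagation argument is redundant); the paper's citation buys brevity at the cost of outsourcing the one genuinely delicate point -- the merely $L^1$ regularity of $q$ -- which you correctly identify and handle.
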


\subsection{Proofs of the lemmata}\label{sec:proofs}
Here we give the proofs of the three lemmata which combined yield Theorem~\ref{thm:mainresult}. We suppose throughout, without further comment, that Assumption~\ref{assum:basic} holds.

\begin{proof}[Proof of Lemma~\ref{lem:weyl}]
This is an immediate consequence of Lemma~\ref{lem:interlacing}, together with the fact that the eigenvalues of the operator associated with the restriction of the form $\form (q,\VM,w)$ to $H^1_0 (\mathcal G)$, that is, with Dirichlet boundary conditions everywhere, satisfy the usual Weyl asymptotics on any bounded interval and thus any finite union of disjoint intervals, see, e.g., \cite[Lemma~2.1]{AtkMin87}.
\end{proof}

The second, Lemma~\ref{lem:nu-lambda}, is in turn based on the principle that $\lambda_n (\mathcal G)$ is always the first eigenvalue of any nodal domain of $\psi_n$, and as a consequence, that the maximal size of any nodal domain converges to zero as $n \to \infty$.

\begin{lemma}
\label{lem:first-eigenvalue}
Given $n \in \N$, the eigenvalue $\lambda_n (\mathcal G)$, and the associated eigenfunction $\psi_n$, with nodal domains $\mathcal G_1, \ldots, \mathcal G_{\nu_n}$, for each $j=1,\ldots,\nu_n$ we have
\begin{displaymath}
	\lambda_n (\mathcal G) = \lambda_1 (\mathcal G_j),
\end{displaymath}
where the operator associated with the latter eigenvalue has Dirichlet conditions at all the boundary points of $\mathcal G_j$ corresponding to zeros of $\psi_n$ (but the same vertex conditions as before at the interior vertices of $\mathcal G_j$, and the same potential $q$ restricted to $\mathcal G_j$).
\end{lemma}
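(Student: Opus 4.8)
The plan is to prove the two assertions $\lambda_n(\mathcal G) = \lambda_1(\mathcal G_j)$ separately, as an inequality in each direction, relying on the variational (Rayleigh quotient) characterisation of the first eigenvalue together with the fact that $\psi_n$ restricted to each nodal domain is a genuine first eigenfunction there. First I would fix a nodal domain $\mathcal G_j$ and consider the restriction $\psi_n|_{\mathcal G_j}$. Since $\mathcal G_j$ is the closure of a connected component of $\{\psi_n \neq 0\}$, by definition $\psi_n$ has one sign on $\mathcal G_j$ and vanishes exactly at those boundary points $\partial \mathcal G_j$ which are zeros of $\psi_n$; these are precisely the points at which we impose Dirichlet conditions in the definition of the operator on $\mathcal G_j$. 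The key observation is that $\psi_n|_{\mathcal G_j}$ lies in the form domain of this restricted operator: it is edgewise $H^1$, it satisfies the weighted continuity condition \eqref{eq:wei-cnd} at the interior vertices of $\mathcal G_j$ (inherited from the global condition on $\mathcal G$), and it vanishes at the Dirichlet boundary points by construction.

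The direction $\lambda_1(\mathcal G_j) \leq \lambda_n(\mathcal G)$ then follows from the variational characterisation: $\psi_n|_{\mathcal G_j}$ is an admissible test function, and since $\psi_n$ is a global eigenfunction that happens to satisfy the eigenvalue equation edgewise on $\mathcal G_j$ with the correct vertex conditions at the interior vertices, its Rayleigh quotient on $\mathcal G_j$ equals $\lambda_n(\mathcal G)$. Concretely, I would integrate by parts edge-by-edge: the bulk term $\int_{\mathcal G_j}(|\psi_n'|^2 + q|\psi_n|^2)$ together with the contribution from $\VM$ at interior vertices reproduces $\lambda_n(\mathcal G)\int_{\mathcal G_j}|\psi_n|^2$, because all boundary contributions at the Dirichlet points vanish (there $\psi_n = 0$), and the vertex-condition contributions at the interior vertices cancel exactly as they do globally. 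Hence $\form_{\mathcal G_j}(\psi_n|_{\mathcal G_j}) = \lambda_n(\mathcal G)\|\psi_n\|^2_{L^2(\mathcal G_j)}$, so $\lambda_1(\mathcal G_j) \leq \lambda_n(\mathcal G)$.

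For the reverse inequality $\lambda_1(\mathcal G_j) \geq \lambda_n(\mathcal G)$, I would use the fact that $\psi_n|_{\mathcal G_j}$ has constant sign on $\mathcal G_j$. The operator on $\mathcal G_j$ is a Schrödinger operator with positivity-preserving vertex conditions (the restrictions of the global ones) and Dirichlet conditions at the remaining boundary points; since $\mathcal G_j$ is connected, the semigroup is irreducible, so by the Krein--Rutman argument invoked in Section~\ref{sec:gen-setting} its first eigenspace is one-dimensional and spanned by a function of constant sign. A nonzero constant-sign function in the form domain that attains the value $\lambda_n(\mathcal G)$ in its Rayleigh quotient must therefore be (a multiple of) the Perron eigenfunction, forcing $\lambda_1(\mathcal G_j) = \lambda_n(\mathcal G)$ rather than a strict inequality; in particular $\psi_n|_{\mathcal G_j}$ cannot be orthogonal to the positive first eigenfunction, so it cannot correspond to any eigenvalue strictly above the first.

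The main obstacle I anticipate is the careful bookkeeping of the vertex contributions in the integration by parts, and more subtly, verifying that $\psi_n|_{\mathcal G_j}$ genuinely has constant sign. This requires the structural remark made earlier in the paper, namely that functions in $H^1_w(\mathcal G)$ can change sign at a vertex only if they vanish there: this guarantees that at every interior vertex of $\mathcal G_j$ the function $\psi_n$ does \emph{not} vanish and hence stays on one side of zero, while any vertex where $\psi_n$ vanishes is automatically a Dirichlet boundary point of $\mathcal G_j$ and thus not interior. Establishing this sign-constancy rigorously, together with the identification of the correct Dirichlet versus interior vertices, is the delicate part; once it is in place, the two variational inequalities combine to give the claimed equality.
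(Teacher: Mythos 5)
Your proposal is correct and follows essentially the same route as the paper: restrict $\psi_n$ to $\mathcal G_j$, check that it is a genuine eigenfunction of the restricted operator with eigenvalue $\lambda_n(\mathcal G)$, and then use the fixed sign of this restriction together with the positivity of the Perron eigenfunction and the orthogonality of distinct eigenspaces to force $\lambda_n(\mathcal G)=\lambda_1(\mathcal G_j)$. The only caveat is that your intermediate claim that a constant-sign function whose Rayleigh quotient equals $\lambda_n(\mathcal G)$ ``must be a multiple of the Perron eigenfunction'' is not true for mere test functions --- it is valid only because you have already verified that $\psi_n|_{\mathcal G_j}$ is an eigenfunction, which is exactly the step the paper's proof relies on.
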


\begin{proof}
Suppose without loss of generality that $\psi_n \geq 0$ in $\mathcal G_j$, with strict inequality except at the Dirichlet vertices of $\mathcal G_j$, and set $\phi_n := \psi_n \chi_{\mathcal G_j}$; in a slight abuse of notation, we will identify $\phi_n$ with its restriction to $\mathcal G_j$ in $L^2 (\mathcal G_j)$. We observe that $\phi_n$ is a classical solution on $\mathcal G_j$, as by construction it satisfies the equation edgewise and the vertex conditions classically; moreover, the corresponding eigenvalue, which we can read off the eigenvalue equation, is $\lambda_n (\mathcal G)$.

That is, $\lambda_n (\mathcal G)$ is an eigenvalue of $\mathcal G_j$, i.e., $\lambda_n (\mathcal G) = \lambda_k (\mathcal G_j)$ for some $k \geq 1$; moreover, its eigenfunction $\phi_n$ is, by construction, strictly positive in $\mathcal G_j$ except at boundary points of $\mathcal G_j$ and any interior Dirichlet vertices. By \cite[Theorem~3]{Kur19}, it is possible to choose the first eigenfunction $\varphi_1$ of $\lambda_1 (\mathcal G_j)$ to have this property. Orthogonality of eigenfunctions on $\mathcal G_j$ belonging to different eigenspaces implies that $\phi_n = c\varphi_1$ for some $c>0$, and we conclude that the corresponding eigenvalues must be equal, $\lambda_n (\mathcal G) = \lambda_1 (\mathcal G_j)$.
\end{proof}

The other ingredient we need for the proof of Lemma~\ref{lem:nu-lambda} is an estimate on the first eigenvalue of any operator $\Op(q,\VM,w,\mV_0)$ on any compact, connected graph (which in practice will be one of the nodal domains of $\psi_n$), which is given in Proposition~\ref{prop:lambda1-est} in Appendix~\ref{app:lambda1-est}. This proposition, when applied to the nodal domains $\Graph_j$ of $\psi_n$ upon invoking Lemma~\ref{lem:first-eigenvalue}, leads to the following estimate on the size of $\Graph_j$.

\begin{lemma}
\label{lem:nodal-size-est}
For all $n \in \N$, for all nodal domains $\mathcal G_j$, $j=1,\ldots,\nu_n$, we have
\begin{equation}
\label{eq:nodal-size-est}
	|\mathcal G_j| \leq \frac{2\pi |\mE|}{\sqrt{\lambda_n (\Graph) + \|q\|_1^2} - \|q\|_1}.
\end{equation}
In particular, if $\lambda_n (\mathcal G)$ is sufficiently large; explicitly, if
\begin{displaymath}
	\lambda_n (\mathcal G) > \left(\frac{2\pi |\mE|}{\ell_{\min}} + \|q\|_1\right)^2 - \|q\|_1^2,
\end{displaymath}
then no nodal domain can contain more than one vertex of $\Graph$.
\end{lemma}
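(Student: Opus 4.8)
The plan is to feed the two ingredients already assembled into each other. Lemma~\ref{lem:first-eigenvalue} identifies $\lambda_n(\mathcal G)$ with the first eigenvalue $\lambda_1(\mathcal G_j)$ of each nodal domain $\mathcal G_j$, viewed as a connected metric graph carrying Dirichlet conditions at the zeros of $\psi_n$ on $\partial\mathcal G_j$ and the inherited vertex conditions and potential elsewhere. Writing $E_j$ for the number of edges of $\mathcal G_j$, so that its average edge length is $|\mathcal G_j|/E_j$, I would apply Proposition~\ref{prop:lambda1-est} to $\mathcal G_j$, which bounds $\lambda_1(\mathcal G_j)$ from above in terms of this average edge length and $\|q\|_1$. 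Rearranging that inequality and inserting $\lambda_1(\mathcal G_j)=\lambda_n(\mathcal G)$ yields
\begin{displaymath}
	|\mathcal G_j| \leq \frac{\pi E_j}{\sqrt{\lambda_n(\mathcal G)+\|q\|_1^2}-\|q\|_1},
\end{displaymath}
valid whenever the denominator is positive; this is automatic once $\lambda_n(\mathcal G)$ is large, which is the only regime in which the statement carries content.

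It then remains to show $E_j\le 2|\mE|$, and I expect this edge count to be the crux. Each edge of $\mathcal G_j$ is a maximal subinterval of a single original edge $\me\in\mE$ on which $\psi_n$ has constant nonzero sign. On a fixed edge $\me$, the interior zeros of $\psi_n$ cut $\me$ into consecutive nodal intervals of alternating sign; any such interval whose two endpoints are both interior zeros meets the rest of $\mathcal G$ only at points where $\psi_n$ vanishes, and hence constitutes a nodal domain in its own right. Consequently a nodal domain containing more than one edge can collect from each $\me$ at most the (at most two) nodal intervals abutting the endpoints $\mv,\mw$ of $\me$; summing over the $|\mE|$ edges gives $E_j\le 2|\mE|$, and substituting this into the display produces \eqref{eq:nodal-size-est}. (If $\mathcal G_j$ is itself a single interior interval then $E_j=1$ and the bound is immediate.)

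For the final assertion, suppose $\lambda_n(\mathcal G) > \bigl(\tfrac{2\pi|\mE|}{\ell_{\min}}+\|q\|_1\bigr)^2-\|q\|_1^2$. Then $\sqrt{\lambda_n(\mathcal G)+\|q\|_1^2}-\|q\|_1 > \tfrac{2\pi|\mE|}{\ell_{\min}}$, so the right-hand side of \eqref{eq:nodal-size-est} is strictly smaller than $\ell_{\min}$, whence $|\mathcal G_j|<\ell_{\min}$ for every $j$. Since passing from one vertex of $\mathcal G$ to any distinct vertex forces one to traverse at least one complete edge, the distance between two distinct vertices is at least $\ell_{\min}$; a connected subset of $\mathcal G$ of total length strictly less than $\ell_{\min}$ can therefore contain at most one vertex, which is the claim.

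The genuine obstacle is the counting inequality $E_j\le 2|\mE|$: the delicate point is that an interior zero of $\psi_n$ really does disconnect $\mathcal G$ locally, so that the nodal intervals with two interior-zero endpoints are isolated singleton nodal domains and cannot be amalgamated into a larger $\mathcal G_j$. This is exactly where the one-dimensional structure of the edges and the continuity/sign-change behaviour of $\psi_n$ recorded in Section~\ref{sec:gen-setting} must be invoked carefully; everything else reduces to bookkeeping, once the precise constant in Proposition~\ref{prop:lambda1-est} is imported from the appendix.
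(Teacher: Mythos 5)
Your proposal is correct and follows essentially the same route as the paper: identify $\lambda_n(\mathcal G)=\lambda_1(\mathcal G_j)$ via Lemma~\ref{lem:first-eigenvalue}, apply Proposition~\ref{prop:lambda1-est} to $\mathcal G_j$ with the crude edge count $E_j\le 2|\mE|$, and rearrange; your sign-alternation argument for $E_j\le 2|\mE|$ and the path-length argument for the single-vertex conclusion are exactly the details the paper leaves as parenthetical remarks. No gaps.
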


\begin{proof}
Fix a nodal domain $\mathcal G_j$; then $\mathcal G_j$ certainly cannot have more than $2|\mE|$ edges (note that it could contain both ends of a given edge in $\mathcal G$ without containing the whole edge). Now by Lemma~\ref{lem:first-eigenvalue}, we have $\lambda_n (\mathcal G) = \lambda_1 (\mathcal G_j)$; combining this with the estimate \eqref{eq:pseudoKKMM} applied to $\mathcal G_j$ yields
\begin{displaymath}
	\lambda_n (\mathcal G) \leq \left(\frac{2\pi |\mE|}{|\Graph_j|}+\|q\|_1\right)^2 - \|q\|_1^2.
\end{displaymath}
Rearranging yields \eqref{eq:nodal-size-est}. If $\lambda_n (\mathcal G)$ is sufficiently large as stated, then $|\mathcal G_j| < \ell_{\min}$ for all $j$, meaning no nodal domain can contain an entire edge.
\end{proof}

\begin{remark}
\label{rem:nodal-size-est}
The proof shows that if $\mathcal G_j$ is an interval, then, since we may take $|\mE (\mathcal G_j)|=1$ in \eqref{eq:pseudoKKMM}, \eqref{eq:nodal-size-est} may be improved to
\begin{equation}
\label{eq:better-nodal-size-est}
	|\mathcal G_j| \leq \frac{\pi}{\sqrt{\lambda_n (\Graph) + \|q\|_1^2} - \|q\|_1}.
\end{equation}
\end{remark}

\begin{proof}[Proof of Lemma~\ref{lem:nu-lambda}]
Firstly observe that by definition of the nodal domains, for any $n \in \N$, we have
\begin{equation}
\label{eq:nodal-domain-sum}
	|\{\psi_n \neq 0\}| = \sum_{j=1}^{\nu_n} |\mathcal G_j|.
\end{equation}
Now note that $\lambda_n (\mathcal G) \to \infty$ (this follows from the compactness of the resolvent and the semi-boundedness of the form $\form_{q,\VM,w}$, but can also be obtained as a consequence of Lemma~\ref{lem:weyl}). Hence, by Lemma~\ref{lem:nodal-size-est} there exists some $n_0 \in \N$, which may be chosen to depend only on the metric graph $\mathcal G$ and $q$, such that each nodal domain $\mathcal G_j$ contains at most one vertex of $\mathcal G$, for all $n \geq n_0$. For such $n$, we suppose the nodal domains are ordered in such a way that $\mathcal G_1,\ldots, \mathcal G_{|\mV|}$ each contain at most one vertex, while $\mathcal G_{|\mV|+1},\ldots,\mathcal G_{\nu_n}$ are all intervals; in particular, for all $j \geq |\mV|+1$, by Lemma~\ref{lem:first-eigenvalue}, $\lambda_n (\mathcal G) = \lambda_1 (\mathcal G_j)$. Now on the one hand, since $q \geq 0$, $\lambda_1 (\mathcal G_j) \geq \pi^2/|\mathcal G_j|^2$, whence
\begin{displaymath}
	|\mathcal G_j| \geq \frac{\pi}{\lambda_n (\mathcal G)^{1/2}}.
\end{displaymath}
On the other hand, using \eqref{eq:better-nodal-size-est}, for such nodal domains we also have, supposing without loss of generality that $\lambda_{n_0} > \|q\|_1^2$,
\begin{displaymath}
	|\mathcal G_j| \leq \frac{\pi}{\sqrt{\lambda_n (\Graph) + \|q\|_1^2} - \|q\|_1}
	\leq \frac{\pi}{\lambda_n(\mathcal G)^{1/2} - \|q\|_1}.
\end{displaymath}
Summing over $j$, we obtain the two-sided estimate
\begin{equation}
\label{eq:nu-lambda-intermediate}
	(\nu_n - |\mV|) \cdot \frac{\pi}{\lambda_n (\mathcal G)^{1/2}}+ \sum_{j=1}^{|\mV|} |\mathcal G_j|
	\leq |\{\psi_n \neq 0\}| \leq
	(\nu_n - |\mV|) \cdot \frac{\pi}{\lambda_n(\mathcal G)^{1/2} - \|q\|_1}+ \sum_{j=1}^{|\mV|} |\mathcal G_j|.
\end{equation}
Invoking \eqref{eq:nodal-size-est}, we may estimate the size of the first $|\mV|$ nodal domains by
\begin{displaymath}
	0 \leq \sum_{j=1}^{|\mV|} |\mathcal G_j| \leq \frac{2\pi |\mE| |\mV|}{\lambda_n (\mathcal G)^{1/2}-\|q\|_1}.
\end{displaymath}
Using this in \eqref{eq:nu-lambda-intermediate} and rearranging yields \eqref{eq:nu-lambda-estimates}.
\end{proof}

\begin{proof}[Proof of Lemma~\ref{lem:values-of-psin}]
It suffices to prove the following unique continuation statement: if any eigenfunction $\psi_n$ has a zero at some point $x$ in the interior of an edge, then either $\psi_n \equiv 0$ in a neighbourhood of $x$ or $\psi_n'(x) \neq 0$. But since $\psi_n$ is a solution of the equation $-u'' + (q-\lambda_n)u = 0$ in an open interval about $x$ and $q \in L^1$, this is an immediate consequence of known maximum principles for solutions of such Schrödinger equations, see \cite{BerSmaTes15}.
\end{proof}

\section{A stronger Pleijel's Theorem for the Laplacian with natural vertex conditions}\label{sec:plei-lapl-nat}

In the particular case of the free Laplacian with natural conditions at all vertices, we can say somewhat more. The following, our second main result, is a complement to the main result in~\cite{GnuSmiWeb04}, whose scope we also extend by removing the genericity condition therein.

\begin{theorem}
\label{thm:standard}
In addition to Assumption~\ref{assum:basic}, suppose that $q\equiv 0$, $\VM=0$, $\mw\equiv 1$, $\mV_0=\emptyset$. Then the following assertions hold.
\begin{enumerate}
\item If $\Graph$ does not contain any loops, then the set of edge length vectors in $\R^{|\mE|}_+$ for which, for the corresponding graph with the given topology and these edge lengths, all eigenvalues are simple and $\lim_{n \to \infty} \frac{\nu_n}{n} = 1$, is of the second Baire category (i.e., is a countable intersection of open dense sets).
\item If $\mathcal G$ contains a loop of length $\ell$, then $\frac{\ell}{|\mathcal G|}$ is a point of accumulation of $\frac{\nu_n}{n}$. In particular, the lower estimate of \eqref{eq:estimates} is sharp whenever $\ell_{\min}$ is realised by a loop.
\item If all edge lengths of $\Graph$ are rationally dependent, then there exists an orthonormal basis of eigenfunctions such that $\limsup_{n \to \infty} \frac{\nu_n}{n} = 1$. If $\Graph$ contains a cycle, and is not a loop, then the basis may be chosen so that  additionally  $\liminf_{n \to \infty} \frac{\nu_n}{n} < 1$ holds.
\end{enumerate}
\end{theorem}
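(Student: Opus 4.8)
All three assertions reduce, via Proposition~\ref{prop:accumulation} and the asymptotic \eqref{eq:accumulation-eigenfunctions-o}, to controlling the relative support $|\{\psi_n\neq0\}|/|\Graph|$ along a suitably chosen orthonormal eigenbasis, the possible values of which are constrained by Lemma~\ref{lem:values-of-psin}. For~(1) the plan is to combine two genericity facts available for loopless graphs: the set of length vectors in $\R^{|\mE|}_+$ for which all eigenvalues are simple is a dense $G_\delta$, and so is the set for which no eigenfunction vanishes at any vertex. Both follow by noting that the secular determinant, hence each eigenvalue and the vertex values of each eigenfunction, depend real-analytically on the lengths, so that each condition ``$\lambda_n=\lambda_{n+1}$'' and ``$\psi_n(\mv)=0$'' cuts out a closed, nowhere dense real-analytic subvariety; the \emph{absence of loops} is precisely what guarantees these conditions are not identically satisfied, since a loop of length $\ell$ always forces the functions $\sin(2\pi k\,\cdot/\ell)$ to vanish at its base vertex regardless of the lengths. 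On the (residual) intersection, every $\psi_n$ is nonzero at both endpoints of each edge, hence vanishes on no entire edge, so $|\{\psi_n\neq0\}|=|\Graph|$ for all $n$; feeding this into \eqref{eq:accumulation-eigenfunctions-o} gives $\nu_n=n+o(n)$, i.e. $\frac{\nu_n}{n}\to1$. This residual set is contained in the set described in~(1), which is therefore itself residual.

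For~(2) the plan is an explicit construction. If $\me$ is a loop of length $\ell$ based at a vertex $\mv$, then for each $k\ge1$ the function equal to $\sin(2\pi k x/\ell)$ on $\me$ and to $0$ on every other edge is continuous (it vanishes at $\mv$) and satisfies the Kirchhoff condition there, since the two half-edge derivatives cancel; it is thus a genuine eigenfunction of eigenvalue $(2\pi k/\ell)^2\to\infty$ whose support is exactly the loop, so that $|\{\,\cdot\neq0\}|=\ell$. These are mutually orthogonal eigenfunctions, so I would choose the eigenbasis to contain all of them; along this subsequence $|\{\psi_n\neq0\}|/|\Graph|=\ell/|\Graph|$, and Proposition~\ref{prop:accumulation} places $\ell/|\Graph|$ in $\acc\{\tfrac{\nu_n}{n}:n\in\N\}$. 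When $\ell=\ell_{\min}$ is realised by a loop, Lemma~\ref{lem:values-of-psin} shows $\ell_{\min}/|\Graph|$ is the least value the relative support can take, hence the smallest accumulation point, so that the $\liminf$ in \eqref{eq:estimates} equals $\ell_{\min}/|\Graph|$ and the lower bound is attained.

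For~(3) I would first use rational dependence to pass to an equilateral graph: writing every length as an integer multiple of a common $a$ and inserting degree-two vertices (at which the natural conditions are transparent) turns $\Graph$ into a graph all of whose edges have length $a$, without changing the operator. On such a graph the function equal to $\cos(2\pi j x/a)$ on \emph{every} edge is well defined (being symmetric under $x\mapsto a-x$), takes the value $1$ at every vertex with vanishing derivatives, and is therefore an eigenfunction of eigenvalue $(2\pi j/a)^2$ of full support; inserting these into the basis and letting $j\to\infty$ yields a subsequence with relative support $1$, so $\limsup\frac{\nu_n}{n}=1$. For the second assertion, if $\Graph$ contains a cycle that is a proper subgraph $Z$ (which holds once $\Graph$ has a cycle and is not itself a single cycle, so that $|Z|<|\Graph|$), then for each $m\ge1$ the Dirichlet-type function equal to $\pm\sin(\pi m x/a)$ on the edges of $Z$, with signs dictated by the cycle condition, and to $0$ elsewhere, is an eigenfunction of eigenvalue $(\pi m/a)^2$ supported exactly on $Z$. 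Choosing the basis to contain these for all odd $m$ while keeping the full-support modes at the even values $m=2j$ produces a second subsequence with relative support $|Z|/|\Graph|<1$, whence $\liminf\frac{\nu_n}{n}\le|Z|/|\Graph|<1$ by Proposition~\ref{prop:accumulation}.

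I expect the main difficulty to lie in~(3): one must produce a \emph{single} orthonormal eigenbasis carrying both a full-support subsequence and a $Z$-supported subsequence, which requires bookkeeping of the generally highly degenerate eigenspaces of $(2\pi j/a)^2$ and $(\pi(2j+1)/a)^2$ on the equilateral graph and a check that the prescribed vectors extend to an orthonormal basis of each eigenspace. Pinning down the equilateral reduction, and verifying that ``contains a cycle but is not a single cycle'' indeed supplies a proper cycle $Z$ with $|Z|<|\Graph|$, are the remaining technical points; by contrast~(1) is essentially bookkeeping on top of the cited genericity results, and~(2) is a direct computation.
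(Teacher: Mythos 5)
Your overall strategy coincides with the paper's: all three parts are reduced via Proposition~\ref{prop:accumulation} to controlling $|\{\psi_n\neq0\}|/|\Graph|$, part~(1) rests on the genericity of simple eigenvalues and non-vanishing at vertices for loopless graphs (the paper simply cites \cite{BerLiu17} rather than re-deriving it from the secular determinant), part~(2) uses explicit loop-supported eigenfunctions, and part~(3) uses the equilateral reduction by dummy vertices together with the full-support eigenfunctions $\cos(2\pi jx/a)$ and cycle-supported sine eigenfunctions extended by zero. Your worry at the end about ``bookkeeping of highly degenerate eigenspaces'' is not a real obstacle: two mutually orthogonal families of eigenfunctions can always be completed to an orthonormal eigenbasis by completing within each eigenspace separately, which is all the paper does.

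There is, however, one concrete step in your part~(3) that fails as written. For the cycle-supported modes you take $\pm\sin(\pi m x/a)$ with $m$ odd and ``signs dictated by the cycle condition''. At a vertex of the cycle joining consecutive edges $\me_i,\me_{i+1}$ (all other incident edges carrying the zero function), the Kirchhoff condition forces $\epsilon_{i+1}=(-1)^m\epsilon_i$; for odd $m$ this means the signs must alternate around the cycle, which is consistent only if the cycle consists of an \emph{even} number of unit edges. After the equilateral subdivision the cycle may well have odd length, and then no choice of signs works and your odd-$m$ family is empty. The fix is the paper's choice: use $\sin(2\pi kx/a)$ (even frequency), for which all signs may be taken equal regardless of the cycle's parity; these share the eigenvalue $4\pi^2k^2/a^2$ with the full-support modes $\cos(2\pi kx/a)$ but are orthogonal to them edgewise, so both families fit into one basis. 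A similar parity check is worth making in part~(2): on a loop $\me\simeq(0,\ell)$ with both endpoints at $\mv$, only the even-frequency sines $\sin(2\pi kx/\ell)$ satisfy Kirchhoff at $\mv$ --- which is exactly what you wrote there, so that part is fine.
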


Put differently, in the case of natural vertex conditions and no potential, ``almost all'' graphs (in the usual sense of holding generically and being loop-free) have all eigenvalues simple, and satisfy $\lim_{n\to\infty} \frac{\nu_n}{n} = 1$; however, {at least for non-trees,} if the edge lengths are rationally dependent then this is never the case. Part (2) has an obvious consequence which is nevertheless worth stating explicitly: given any $\varepsilon > 0$ there exists a graph $\mathcal G$ such that for this graph $\liminf_{n \to \infty} \frac{\nu_n}{n} < \varepsilon$.

\begin{remark}
Parts (1) and (2) of Theorem~\ref{thm:standard} also hold, with essentially identical proofs, if any mix of delta couplings and Dirichlet conditions is imposed at some vertices, although for (1) we still need a certain additional genericity assumption (coming from \cite[Theorem~3.6]{BerLiu17}) on the delta conditions. {We expect part (3) to hold for many tree graphs as well, although here the situation is more complicated, as Example~\ref{ex:3-star-3} shows.}
\end{remark}

Actually, we expect that on \emph{any} graph $\Graph$ there exists a choice of (natural Laplacian) eigenfunctions for which we have $\limsup_{n \to \infty} \frac{\nu_n}{n} = 1$. This would be an immediate consequence of the following conjecture together with Proposition~\ref{prop:accumulation}. 
\begin{conjecture}
\label{conj:no}
Let, as usual, $\Graph$ be a compact, connected metric graph. Then there exists a choice of eigenfunctions $\psi_n$ for the Laplacian with natural vertex conditions on $\Graph$ such that the eigenfunctions form an orthonormal basis of $L^2(\Graph)$ and, for a subsequence ${n_k} \in \N$, no eigenfunction $\psi_{n_k}$ vanishes identically on any edge of $\Graph$. In order words, for this choice of eigenfunctions,
\begin{displaymath}
	1 \in \acc \left\{ \frac{|\{\psi_n \neq 0\}|}{|\mathcal G|} : n \in \N \right\}.
\end{displaymath}
\end{conjecture}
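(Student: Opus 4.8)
The plan is to prove the conjecture through Proposition~\ref{prop:accumulation}: since $\acc\{\tfrac{\nu_n}{n}:n\in\N\}=\acc\{|\{\psi_n\neq 0\}|/|\Graph|:n\in\N\}$, it suffices to produce an orthonormal basis of eigenfunctions containing a subsequence $(\psi_{n_k})$ with $|\{\psi_{n_k}\neq 0\}|=|\Graph|$, i.e.\ eigenfunctions that do not vanish identically on any edge (\emph{full support}). Two regimes are already settled by Theorem~\ref{thm:standard}: for loop-free graphs with generic edge lengths part~(1) gives $\lim_{n\to\infty}\tfrac{\nu_n}{n}=1$, and for rationally dependent edge lengths part~(3) gives $\limsup_{n\to\infty}\tfrac{\nu_n}{n}=1$; in both, $1$ is an accumulation point. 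The content of the conjecture is therefore a \emph{uniform} statement covering all intermediate arithmetic types of the edge-length vector $(\ell_\me)_{\me\in\mE}$, and this is where I would concentrate. A convenient sufficient condition for full support is that $\psi_{n_k}$ be nonzero at every vertex, since $\psi\equiv 0$ on $\me$ forces $\psi$ to vanish at both endpoints of $\me$, whereas $\psi(\mv)\neq 0$ makes $\psi$ nonzero on every edge incident to $\mv$.

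The first reduction is purely linear-algebraic and operates eigenvalue by eigenvalue. Fix an eigenvalue $\lambda$ with eigenspace $E_\lambda$, and for each edge $\me$ let $E_\lambda^\me:=\{\psi\in E_\lambda:\psi|_\me\equiv 0\}$, a subspace of $E_\lambda$. If $E_\lambda^\me\subsetneq E_\lambda$ for \emph{every} $\me$, then, because a finite-dimensional real vector space is never a finite union of proper subspaces, there is a $\psi\in E_\lambda\setminus\bigcup_{\me}E_\lambda^\me$; such a $\psi$ has full support, and one completes $\{\psi\}$ to an orthonormal basis of $E_\lambda$ and repeats over all eigenvalues to obtain the desired eigenbasis. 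The conjecture thus reduces to the spectral statement that \emph{infinitely many eigenvalues $\lambda$ admit no ``universally avoided'' edge}, i.e.\ for which no single edge $\me$ has $E_\lambda\subseteq E_\lambda^\me$. I would stress that naive counting does not suffice: the eigenvalues whose entire eigenspace vanishes on a fixed $\me$ form a subset of the spectrum of $\Graph$ with $\me$ deleted and Dirichlet conditions at its endpoints, of Weyl density at most $(|\Graph|-\ell_\me)/\pi$; but the union bound over the $|\mE|$ edges gives only the bound $(|\mE|-1)|\Graph|/\pi$ on the density of ``bad'' eigenvalues, which is no smaller than the total Weyl density $|\Graph|/\pi$ as soon as $|\mE|\geq 2$. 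One genuinely needs to control the \emph{correlations} between the sets of vanishing edges across the spectrum.

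To access these correlations I would pass to the secular (bond-scattering) description, which is especially favourable here because for natural vertex conditions the vertex-scattering matrices are energy-independent. Writing the $2|\mE|$-dimensional unitary bond-scattering matrix as $U(k)=\Sigma\,D(k)$ with $\Sigma$ the fixed vertex-scattering matrix and $D(k)=\mathrm{diag}(e^{ik\ell_\me})$, one has $k^2\in\mathrm{spec}(\Graph)$ precisely when $1\in\mathrm{spec}\,U(k)$, and the eigenfunction is reconstructed from the corresponding eigenvector via its directed-edge amplitudes; full support is then the condition that no edge has both of its amplitudes equal to zero. Since $U(k)$ depends on $k$ only through $(e^{ik\ell_\me})_{\me}$, which equidistributes as $k\to\infty$ on the subtorus $\mathbb{T}_0\subseteq(\R/2\pi\Z)^{|\mE|}$ determined by the rational relations among the $\ell_\me$, the genuine eigenvalues equidistribute on the secular variety $\{\theta\in\mathbb{T}_0:1\in\mathrm{spec}\,U(\theta)\}$. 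I would then try to show that this variety contains an open (hence positive-measure) portion on which the eigenvalue-$1$ eigenvector has all edge-amplitude pairs nonvanishing, and that the equidistributing sequence of eigenvalue-phases visits it infinitely often, yielding the required subsequence $(\psi_{n_k})$.

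The main obstacle, and the reason this is stated only as a conjecture, is precisely the interplay between the arithmetic of $(\ell_\me)_{\me}$ and the geometry of the secular variety. When $\mathbb{T}_0$ is full-dimensional (rationally independent lengths) the vanishing-amplitude conditions are non-generic and the argument should go through; when $\mathbb{T}_0$ collapses to a point or line (full rational dependence) the situation is handled directly in Theorem~\ref{thm:standard}(3). The difficulty is to rule out, for \emph{every} intermediate subtorus $\mathbb{T}_0$, that a \emph{partial} rational resonance pins some edge amplitude identically to zero along the whole intersection $\mathbb{T}_0\cap\{1\in\mathrm{spec}\,U\}$, thereby forcing a universally avoided edge at all large eigenvalues; combined with the need to guarantee that the Diophantine approximation produces exact eigenvalues (exact solutions of $\det(U(k)-I)=0$) rather than merely approximate ones, with eigenvectors of full support, this is the delicate point a complete proof would have to resolve.
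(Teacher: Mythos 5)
The statement you are trying to prove is stated in the paper as Conjecture~\ref{conj:no}, and the paper offers \emph{no proof} of it: the authors only record partial evidence, namely that it holds for generic (loop-free) edge lengths and for rationally dependent edge lengths via Theorem~\ref{thm:standard}, and that lasso trees cannot be counterexamples by results of Serio. Your proposal does not close this gap either, and you say so yourself in the final paragraph. What you have is a correct and useful pair of reductions plus a programme, not a proof. The reductions are sound: by Proposition~\ref{prop:accumulation} it suffices to exhibit infinitely many eigenvalues whose eigenspaces contain a fully supported eigenfunction; and since a finite-dimensional real vector space is never a finite union of proper subspaces, this follows once infinitely many eigenvalues $\lambda$ have no ``universally avoided'' edge, i.e.\ no edge $\me$ with $E_\lambda\subseteq E_\lambda^{\me}$. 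Your observation that a naive Weyl-density union bound over edges fails for $|\mE|\ge 2$ is also correct and correctly identifies why the problem is not soft.

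The genuine gap is the entire analytic core: the claim that on every intermediate subtorus $\mathbb{T}_0$ the secular variety $\{1\in\mathrm{spec}\,U(\theta)\}$ carries a positive-measure (with respect to the relevant Barra--Gaspard-type measure, not merely Lebesgue measure on $\mathbb{T}_0$) portion on which the eigenvector of $U(\theta)$ has no vanishing edge-amplitude pair, together with an equidistribution statement strong enough to guarantee that actual eigenvalues land in that portion infinitely often. Neither half is established, and both are delicate: partial rational resonances can force components of the secular eigenvector to vanish identically along whole components of the intersection, and equidistribution of the eigenvalue phases on the secular manifold is itself a nontrivial input that degenerates precisely in the resonant cases you would need to control. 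Since this is exactly the difficulty that leads the authors to state the result as a conjecture rather than a theorem, your proposal should be regarded as a (reasonable) strategy outline with its decisive step missing, not as a proof.
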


It follows from parts (1) and (2) of Theorem~\ref{thm:mainresult} that the conjecture is true generically; it is also true in the case where all edge lengths are rationally dependent, by (3). A counterexample would hence require a graph to have at least two rationally independent edge lengths. Additionally, topological constraints exist, too: it follows from~\cite[Lemma~2.7 and Corollary~2.8]{Ser20} that so-called \textit{lasso trees} (i.e., graphs that can be constructed by attaching at most one loop to any leaf of a tree) cannot be counterexamples, either. Before giving the proof of Theorem~\ref{thm:standard}, we will give a simple example which shows that the sequence $\frac{\nu_n}{n}$, and even its set of points of accumulation, can depend on the choice of the basis of eigenfunctions $\psi_n$.

\begin{example}
\label{ex:4-star-choice}
Consider the equilateral $4$-star; more precisely, we take $\Graph$ to consist of four edges $\me_1, \ldots, \me_4$, each of length $1$ and identified with the interval $[0,1]$, joined at a common vertex of degree four (identified with $0$ on each edge) and with the other four vertices each being of degree one. Then there are two families of eigenfunctions (and corresponding eigenvalues):
\begin{itemize}
\item Eigenfunctions which are invariant under permutation of the edges; up to scalar multiples these are of the form $\varphi_k (x) = \cos (\pi k x)$, $k \in \N$, on each edge $\me_j \simeq [0,1]$, with corresponding eigenvalues $\pi^2k^2$, each of which has multiplicity one.
\item Eigenfunctions which vanish at the central vertex: the corresponding eigenvalues, $\pi^2(k-\frac{1}{2})^2$, $k \in \N$, all have multiplicity three. Any function $\phi$ in the eigenspace has the form $c_{j} \sin (\pi(k-\frac{1}{2})x)$ on each edge $\me_j$, where the coefficients $c_{j} = c_{j}(\phi) \in \R$ are chosen in such a way that the Kirchhoff condition is satisfied at the vertex.
\end{itemize}
We present two different choices for the $c_{j}$, which give rise to two different families of orthogonal bases with different nodal counts. To keep the presentation more compact and easier to read, we present these choices in table form:\\
\begin{center}
\begin{tabular}{l|c|c|c|c}
& $c_1$ & $c_2$ & $c_3$ & $c_4$\\  \hline
$\phi_1$ & 1 & -1 & 0 & 0  \\ \hline
$\phi_2$ & 0 & 0 & 1 & -1 \\ \hline
$\phi_3$ & 1 & 1 & -1 & -1 \\
\end{tabular}
\hspace{10pt}
\begin{tabular}{l|c|c|c|c}
& $c_1$ & $c_2$ & $c_3$ & $c_4$\\  \hline
$\phi_1$ & 1 & -1 & 0 & 0  \\ \hline
$\phi_2$ & 1 & 1 & -2 & 0 \\ \hline
$\phi_3$ & 1 & 1 & 1 & -3 \\
\end{tabular}
\end{center}
Thus, for example, in the second case, for each $k \in \N$ there is an eigenfunction $\phi_3 = \phi_3(k)$ which takes the form $\sin (2\pi(k-1)x)$ on each of $\me_1$, $\me_2$ and $\me_3$,  and $-3\sin (2\pi(k-1)x)$ on $\me_4$. The orthogonality of $\phi_1,\phi_2,\phi_3$ within each family is easy to check, as we simply require that the respective row vectors have inner product zero with each other; while the Kirchhoff condition is satisfied as long as the sum of the entries in each vector is zero. (The eigenfunctions will not have norm one, but this is obviously just a question of rescaling.) Now in the first family, there are two eigenfunctions each supported on two different edges and one supported on all four; in the second family, the second eigenfunction is supported on three edges rather than two. It follows from Proposition~\ref{prop:accumulation} (also taking into account the nature of the eigenfunctions not vanishing on the central vertex) that in the first case the set of points of accumulation of the sequence $\frac{\nu_n}{n}$ is $\{\frac{1}{2}, 1\}$ and in the second case it is $\{\frac{1}{2}, \frac{3}{4}, 1\}$.
\end{example}

While part (3) of Theorem \ref{thm:standard} states that, for all graphs containing a cycle and having rationally dependent edge lengths, there  exists an orthonormal basis of eigenfunctions such that \(\liminf_{n\rightarrow\infty}\frac{\nu_n}{n}<1\), this is not necessarily true if the graph is a tree: indeed, the following example shows that there are trees with rationally dependent edge lengths where any eigenfunction of the Laplacian with natural conditions is supported on the whole tree, which in turn yields, by Proposition \ref{prop:accumulation}, that \(\lim_{n\rightarrow\infty}\frac{\nu_n}{n}=1\) holds for any orthonormal basis of eigenfunctions.

\begin{example}
\label{ex:3-star-3}
Consider the \(3\)-star \(\mathcal G\) consisting of three edges \(\me_1,\me_2,\me_3\) of edge lengths \(\ell_1,\ell_2,\ell_3\) respectively. An eigenfunction \(\varphi=(\varphi_1,\varphi_2,\varphi_3)\) corresponding to some eigenvalue \(\lambda>0\) is of the form \(\varphi_j(x)=c_j\cos(\sqrt{\lambda}x)\) on the edge \(\me_j\simeq [0,\ell_j]\) where \(\ell_j\) corresponds to the centre vertex of the star. If \(\varphi\) vanished on some edge \(\me_i\), we would obtain \(c_i=0\) and \(c_j\neq 0\) for \(j\neq i\). Then continuity in the centre vertex yields \(0=\varphi_j(\ell_j)\) for \(j\neq i\) and, thus \(0=\cos(\ell_j\sqrt{\lambda})\). Therefore there is some \(m_j\in\mathbb N\) such that \(\ell_j\sqrt{\lambda}=\pi(m_j-\frac{1}{2})\). This yields
\begin{equation}\label{eq:contradiction-example-fully-supp-ef}
		\frac{\ell_k}{\ell_j}(2m_k-1)=2m_j-1
\end{equation}
for \(k,j\neq i\). Now we choose \(\ell_1=1\), \(\ell_2\) and \(\ell_3=4\). Suppose without loss of generality that \(\ell_k> \ell_j\) in \eqref{eq:contradiction-example-fully-supp-ef}. Then, with our choice of the edge lengths, \eqref{eq:contradiction-example-fully-supp-ef} clearly leads to a contradiction, since the left-hand side  is an even integer, whereas the right-hand side is odd. Therefore all eigenfunctions on the \(3\)-star with edge lengths \(1\), \(2\) and \(4\) must be supported on the whole graph.
\end{example}

\begin{proof}[Proof of Theorem~\ref{thm:standard}]
(1) Under the stated conditions, the set of edge length vectors for which all eigenvalues of the corresponding graph are simple and none of the eigenfunctions vanish at any vertices, is of the second Baire category in $\R^{|\mE|}_+$: this is the principal result of \cite{BerLiu17} (see Theorem~3.6 and Remark~3.7 there). It follows in particular that no eigenfunction vanishes identically on any edge (cf.\ also Lemma~\ref{lem:values-of-psin} and its proof); in particular, $|\{\psi_n \neq 0\}| = |\Graph|$ for all $n \in \N$. Now the statement is an immediate consequence of Proposition~\ref{prop:accumulation}.

(2) If $\mathcal G$ is just a loop of length $\ell$, then no eigenfunction vanishes identically on any set of positive measure, so we are again in the situation where $|\{\psi_n \neq 0\}| = |\Graph|$ for all $n \in \N$ and hence $\frac{\ell}{|\mathcal G|} = 1$ is indeed a (in fact the only) point of accumulation of $\frac{\nu_n}{n}$. So suppose $\me$ is a loop attached to $\Graph \setminus \me$ at a vertex $\mv$; then there exists a sequence of eigenvalues supported on $\me$ (more precisely, making the identification $\me \simeq (0,\ell_\me)$, where both $0$ and $\ell_\me$ correspond to $\mv$, the eigenvalues associated with the family of eigenfunctions $\psi_j (x) = \sin(\frac{j \pi x}{\ell_e})$ on $\me$ extended by zero on $\Graph \setminus \me$ have this property). The assertion is now, again, an immediate consequence of Proposition~\ref{prop:accumulation}.

(3) By inserting dummy vertices as necessary, we may assume that the graph is in fact equilateral; after rescaling if necessary, we may also assume without loss of generality that each edge has length $1$. The following proof is essentially based on the possibility of considering all eigenfunctions as linear combinations of full frequency eigenfunctions on each edge, as in~\cite{Bel85}.

We first show that for the correct choice of eigenfunctions $\limsup_{n \to \infty} \frac{\nu_n}{n} = 1$. To do so, for each $k \in \N$ we construct a function $\varphi_k \in H^1(\Graph) \setminus \{0\}$ by setting $\varphi_k(x)=\cos(2\pi k x)$ on each edge $\me \simeq [0,1]$. Note that this function is continuous as it takes the value $1$ at every vertex. We also see that $\varphi_k$ satisfies the Kirchhoff condition at every vertex, as its normal derivative on each edge pointing into any vertex is always zero by construction. Hence it must be an eigenfunction of the Laplacian with natural vertex conditions, and eigenvalue $4\pi^2 k^2$. This gives us an infinite sequence of eigenfunctions $\varphi_k = \psi_{n_k}$ (for some $n_k \in \N$), each of which is supported on the whole of $|\Graph|$. That $\limsup_{n \to \infty} \frac{\nu_n}{n} = 1$ now follows from Theorem~\ref{thm:mainresult} and Proposition~\ref{prop:accumulation}.

It remains to show that we can find another sequence of eigenfunctions, orthogonal to the $\varphi_k$, none of which are supported on the whole graph if  $\Graph$ contains a cycle: this will show that $\liminf_{n\to \infty}\frac{\nu_n}{n}<1$. By assumption the cycle cannot exhaust $\Graph$. We construct eigenfunctions $\phi_k$, $k \in \N$, by setting $\phi_k (x) = \sin (2\pi k x)$ on each edge $\me \simeq [0,1]$ belonging to the cycle and $0$ on the rest of the graph. That $\phi_k$ is indeed an eigenfunction can be seen immediately, as can the fact that $\frac{|\phi_k \neq 0|}{|\Graph|} \in (0,1)$ is a constant independent of $k$. Moreover, clearly the $\phi_k$ and the $\varphi_j$ are always orthogonal to each other; hence there exists a choice of eigenfunctions for $\Graph$ containing both sequences (up to renormalisation to ensure the eigenfunctions have norm $1$).
\end{proof}

\section{Pleijel's theorem for the $p$-Laplacian}
\label{sec:pleijel-p}

In this last section we are going to turn to a different class of operators. The $p$-Laplacian on metric graphs can be generally introducing by considering the Fréchet differentiable energy functional
\[
{\mathfrak E}_p:
u\mapsto \int_{\mathcal G} |u'|^p \dx,\qquad u\in D({\mathfrak E}_p):= W^{1,p}(\Graph),
\]
and taking its Fréchet derivative in the real Hilbert space $L^2(\mathcal G)$; this returns natural vertex conditions, i.e., continuity across the vertices along with a nonlinear analogue of Kirchhoff's condition. Unlike in the linear case of $p=2$, different notions of eigenvalues for the $p$-Laplacian may \textit{a priori} coexist, see Appendix~\ref{sec:p-weyl}, with \textit{Carathéodory} eigenvalues being more general than \textit{variational} ones. Given a general compact metric graph, it seems to be unknown how large the the set of Carathéodory eigenvalues of this operator is, but its subset that is most relevant for our purposes -- the set of variational eigenvalues -- is certainly countably infinite; such variational eigenvalues can be characterised by the Ljusternik--Schnirelmann principle, a nonlinear counterpart of the linear min-max principle.

Here we will denote by $(\lambda_{n,p}(\Graph))_{n\in\mathbb N}$ the sequence of variational eigenvalues, along with a sequence of associated (Carathéodory) eigenfunctions $(\psi_{n,p})_{n\in\mathbb N}$, which we fix throughout; each eigenfunction has $\nu_{n,p}$ corresponding nodal domains $\mathcal G_1, \ldots, \mathcal G_{\nu_{n,p}}$.

Actually, in view of the nonlinear versions of the Beurling--Dény conditions in~\cite{CipGri03}, as in \eqref{eq:form-2}, different vertex conditions inducing (nonlinear) positive semigroups  can be obtained upon considering the above energy on spaces of the form $W^{1,p}_w(\Graph)$ and/or adding boundary terms; we expect our results to continue to hold for these. However, owing to a lack of background theory available for such nonlinear operators on metric graphs, we will not pursue such generalisations here.

In this section we will always impose the following
\begin{assumption}\label{assum:basic-p}
$\mathcal G$ is a compact, connected metric graph with underlying combinatorial graph $\mG=(\mV,\mE)$ and edge lengths $\ell_\me$, $\me\in\mE$; we set $\ell_{\min} := \min_{\me \in \mE} \ell_\me$. We also fix $p \in (1,\infty)$ and let $q = \frac{p}{p-1}$ be its Hölder conjugate.
\end{assumption}

Our third main result, a version of Pleijel's theorem for the $p$-Laplacian with natural vertex conditions, is a direct analogue of Theorem~\ref{thm:mainresult}.

\begin{theorem}
\label{thm:pleijel-p}
Under Assumption~\ref{assum:basic-p}, and with the notation on the nodal count introduced above, we have
\begin{equation}
\label{eq:accumulation-p}
	\acc \left\{ \tfrac{\nu_{n,p}}{n} : n \in \N \right\} \subset \left\{ \frac{\sum_{\me \in \mE_0} \ell_\me}{|\mathcal G|} : \mE \supset \mE_0
	\text{ is a nonempty set of edges} \right\}.
\end{equation}
In particular, $\acc \left\{ \tfrac{\nu_{n,p}}{n} : n \in \N \right\}$ is a finite set, and
\begin{equation}
\label{eq:estimates-p}
	0 < \frac{\ell_{\min} }{|\mathcal{G}|} \leq \liminf_{n \to \infty} \frac{\nu_{n,p}}{n} \leq \limsup_{n \to \infty} \frac{\nu_{n,p}}{n} \leq 1,
\end{equation}
where $\ell_{\min} := \min \{ \ell_\me : \me \in \mE \}$.
\end{theorem}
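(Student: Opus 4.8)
The plan is to mirror the proof of Theorem~\ref{thm:mainresult}, reducing the statement to $p$-analogues of Lemmata~\ref{lem:weyl}, \ref{lem:nu-lambda} and \ref{lem:values-of-psin}. The target is the asymptotic identity
\[
	\nu_{n,p} = \frac{|\{\psi_{n,p} \neq 0\}|}{|\Graph|}\, n + o(n) \qquad \text{as } n \to \infty,
\]
together with the fact that $|\{\psi_{n,p}\neq 0\}|$ can only equal a partial sum $\sum_{\me\in\mE_0}\ell_\me$ of edge lengths. Granting these, the inclusion \eqref{eq:accumulation-p} follows exactly as in the linear case, while \eqref{eq:estimates-p} is immediate from $\ell_{\min}\le |\{\psi_{n,p}\neq 0\}|\le |\Graph|$, the lower bound holding because, as shown below, the support is a nonempty union of full edges.

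For the displayed identity I would first invoke the Weyl law $\lambda_{n,p}(\Graph) = (1+o(1))\,C_p\,(n/|\Graph|)^p$ proved in Appendix~\ref{sec:p-weyl}, where $C_p$ is the first Dirichlet $p$-Laplacian eigenvalue of the unit interval, so that an interval of length $L$ with Dirichlet conditions at both ends has first eigenvalue $C_p/L^p$. The second ingredient is the nonlinear counterpart of Lemma~\ref{lem:first-eigenvalue}: for every nodal domain $\Graph_j$ of $\psi_{n,p}$ one has $\lambda_{n,p}(\Graph) = \lambda_{1,p}(\Graph_j)$, the first variational eigenvalue of the $p$-Laplacian on $\Graph_j$ carrying Dirichlet conditions at the zeros of $\psi_{n,p}$ on $\partial\Graph_j$ and the inherited natural conditions at interior vertices. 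As before, $\psi_{n,p}\chi_{\Graph_j}$ is a sign-definite solution on $\Graph_j$; that a sign-definite eigenfunction must belong to the \emph{first} eigenvalue is the nonlinear Perron--Frobenius principle for the $p$-Laplacian (the first eigenvalue is the only one admitting an eigenfunction of constant sign), which I would take from \cite{DelRos16,BerKenKur17}. Substituting a $p$-isoperimetric bound $\lambda_{1,p}(\Graph_j)\le (C|\mE|/|\Graph_j|)^p$ — the $p$-analogue of Proposition~\ref{prop:lambda1-est} in the spirit of Nicaise — into this identity forces, as in Lemma~\ref{lem:nodal-size-est}, all but at most $|\mV|$ of the nodal domains to be genuine intervals (containing no vertex of $\Graph$) once $\lambda_{n,p}(\Graph)$ is large. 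Such interval domains are Dirichlet at both endpoints, so $|\Graph_j| = C_p^{1/p}\lambda_{n,p}(\Graph)^{-1/p}$ exactly; summing over them and substituting the Weyl law yields the displayed identity, paralleling Lemma~\ref{lem:nu-lambda}.

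It remains to determine the admissible values of $|\{\psi_{n,p}\neq 0\}|$, which is the genuinely nonlinear step and, I expect, the main obstacle — precisely because unique continuation fails globally: an eigenfunction may vanish identically on some edges while being nonzero on others. The key point is that this is the \emph{only} way it can vanish on a set of positive measure. On a single edge $\psi_{n,p}$ solves $-(|u'|^{p-2}u')' = \lambda_{n,p}|u|^{p-2}u$; were it to vanish on a proper subinterval while being, say, positive on an adjacent one, then on the union it would be a nonnegative, not identically vanishing, $p$-superharmonic function possessing an interior zero, contradicting the strong maximum principle for the one-dimensional $p$-Laplacian. Hence on each edge $\psi_{n,p}$ is either identically zero or has only isolated zeros, so that $|\{\psi_{n,p}\neq 0\}|$ is precisely the sum of the lengths of the edges on which $\psi_{n,p}\not\equiv 0$; this is the required $p$-analogue of Lemma~\ref{lem:values-of-psin}.

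Once the first-eigenvalue identity for nodal domains (resting on nonlinear Perron--Frobenius theory and a robust $p$-isoperimetric estimate) and the strong-maximum-principle substitute for unique continuation are in hand, the remaining bookkeeping assembling \eqref{eq:accumulation-p} and \eqref{eq:estimates-p} is identical to the linear case and needs no genericity or topological hypotheses.
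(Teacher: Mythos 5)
Your proposal is correct and follows essentially the same route as the paper: Weyl asymptotics for $\lambda_{n,p}$ via Dirichlet--Neumann bracketing, the identity $\lambda_{n,p}(\Graph)=\lambda_{1,p}(\Graph_j)$ on each nodal domain via the sign-definiteness characterisation of the first eigenvalue (the paper cites \cite{KawLin06} for this), the Nicaise-type upper bound on $\lambda_{1,p}$ from \cite{DelRos16} to force all but $|\mV|$ nodal domains to be intervals, and the resulting asymptotic $\nu_{n,p}=\frac{|\{\psi_{n,p}\neq 0\}|}{|\Graph|}n+o(n)$ combined with edgewise unique continuation. The one point of divergence is the unique continuation lemma, where you invoke the strong maximum principle for the one-dimensional $p$-Laplacian while the paper deduces $\psi_{n,p}'(x)=0$ at a non-sign-change zero and applies uniqueness for the first-order Carathéodory system from \cite{LanEdm11}; both mechanisms yield the required conclusion.
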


We also observe that Proposition~\ref{prop:accumulation} holds verbatim with $\nu_{n,p}$ and $\psi_{n,p}$ in place of $\nu_n$ and $\psi_n$, respectively. The proof of Theorem~\ref{thm:pleijel-p} (and Proposition~\ref{prop:accumulation} in this case) follows exactly the same lines as above.

In this case, we give a short proof of the Weyl asymptotics for $\lambda_{n,p}(\Graph)$ in the appendix (see Theorem~\ref{thm:p-weyl}), as it does not previously seem to have been established for the $p$-Laplacian on metric graphs. We next state $p$-versions of unique continuation (cf.\ Lemma~\ref{lem:values-of-psin}), the fact that $\lambda_{n,p}$ is the first Dirichlet eigenvalue restricted to each nodal domain of $\psi_{n,j}$ (cf.\ Lemma~\ref{lem:first-eigenvalue}) and a basic upper bound on the first Dirichlet eigenvalue (cf.\ Proposition~\ref{prop:lambda1-est}), respectively.

The following lemma on unique continuation is actually valid for any vertex conditions enforced in the (real) Sobolev space $W^{1,p}(\mathcal G)$, the domain of ${\mathfrak E}_p$, since they necessarily result in real eigenvalues and eigenfunctions.

\begin{lemma}[Possible values of $|\{\psi_{n,p} \neq 0\}|$]
\label{lem:values-of-p-psin}
Under Assumption~\ref{assum:basic-p},
\begin{equation}
\label{eq:values-of-p-psin}
	\Big\{ |\{\psi_{n,p} \neq 0\}| : n \in \N \Big\} \subset \left\{ \sum_{\me \in \mE_0} \ell_\me : \mE \supset \mE_0 
	\text{ is a nonempty set of edges} \right\}.
\end{equation}
\end{lemma}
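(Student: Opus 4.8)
The plan is to prove a unique continuation statement for eigenfunctions of the $p$-Laplacian analogous to the one used in the proof of Lemma~\ref{lem:values-of-psin}. The right-hand side of \eqref{eq:values-of-p-psin} is exactly the set of all possible total lengths of unions of edges; so it suffices to show that for any variational eigenfunction $\psi_{n,p}$ and any edge $\me$, the set $\{\psi_{n,p} \neq 0\} \cap \me$ is, up to a null set, either all of $\me$ or empty. Equivalently, I would show that $\psi_{n,p}$ cannot vanish on a subinterval of positive length of an edge without vanishing identically on the whole edge. Once this \emph{edgewise all-or-nothing} property is established, $\{\psi_{n,p} \neq 0\}$ is (up to measure zero) a union of edges $\bigcup_{\me \in \mE_0} \me$ for some $\mE_0 \subset \mE$, and its measure is $\sum_{\me \in \mE_0} \ell_\me$, which is precisely a point in the target set. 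Note that $\mE_0$ is nonempty since $\psi_{n,p} \not\equiv 0$.

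First I would reduce the claim to an ODE statement on a single edge. On the interior of any edge $\me \simeq (0,\ell_\me)$, the eigenfunction $\psi = \psi_{n,p}$ is a real-valued (weak) solution of the one-dimensional $p$-Laplace eigenvalue equation
\begin{equation}
\label{eq:p-edge-ode}
	-\bigl(|\psi'|^{p-2}\psi'\bigr)' = \lambda_{n,p}\, |\psi|^{p-2}\psi,
\end{equation}
since the natural vertex conditions impose no interior constraint. The reality of $\psi$ is guaranteed because, as the lemma statement notes, the vertex conditions are enforced in the real Sobolev space $W^{1,p}(\mathcal G)$, so eigenfunctions may be taken real. The key step is then a unique continuation property for \eqref{eq:p-edge-ode}: if $\psi$ vanishes on a subinterval of positive length, then $\psi \equiv 0$ on the whole edge. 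The cleanest route is to invoke the well-posedness theory for the one-dimensional $p$-Laplacian initial value problem. The map $(\psi,\, |\psi'|^{p-2}\psi')$ evolves according to a first-order system; and by the regularity results for such half-linear equations (see e.g.\ \cite{ReiWal99,BinDra03} as already cited in the introduction for Sturm-type oscillation theory), solutions of \eqref{eq:p-edge-ode} are uniquely determined by their Cauchy data $(\psi(x_0),\, |\psi'(x_0)|^{p-2}\psi'(x_0))$ at any point $x_0$. If $\psi$ vanishes on an interval $(a,b)$, then at any interior point $x_0 \in (a,b)$ both $\psi(x_0)$ and $\psi'(x_0)$ are zero, so the Cauchy data are trivial; uniqueness forces $\psi \equiv 0$ on the maximal interval of existence, i.e.\ on all of $(0,\ell_\me)$, and hence by continuity on $\me$.

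\textbf{The main obstacle} is precisely this unique continuation property, which is genuinely more delicate than in the linear case $p = 2$. For $p = 2$ it follows from the classical theory of second-order linear ODEs with $L^1$ coefficients; for general $p$, the nonlinearity $|\psi'|^{p-2}\psi'$ degenerates where $\psi' = 0$, so the standard Picard--Lindelöf argument does not apply directly and one must appeal to the specialized uniqueness theory for half-linear equations. The subtle point is that uniqueness for \eqref{eq:p-edge-ode} from Cauchy data $(\psi(x_0), \psi'(x_0)) = (0,0)$ could conceivably fail, allowing a nontrivial solution to branch off from the zero solution; this is exactly the scenario that would break the argument. I would therefore need to cite (or verify) that the relevant half-linear uniqueness theorem applies under our hypotheses — which it does, since the equation has no potential term and the eigenfunctions are genuine solutions. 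This is the only place where the proof differs in substance from that of Lemma~\ref{lem:values-of-psin}; the remaining combinatorial bookkeeping (collecting the edges on which $\psi$ does not vanish and summing their lengths) is routine and identical to the linear case.
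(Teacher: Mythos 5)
Your proposal follows essentially the same route as the paper's proof: reduce the lemma to an edgewise unique continuation statement, and settle that by uniqueness for the Cauchy problem of the first-order half-linear system $u' = |v|^{\frac{1}{p-1}}\sign v$, $v' = -\lambda |u|^{p-1}\sign u$ with trivial data $(u,v)(x_0)=(0,0)$; the paper invokes \cite[Theorem~3.1]{LanEdm11} for precisely this uniqueness, which is the delicate point you correctly isolate. The only structural difference is the starting point: the paper launches the argument from a single interior zero at which $\psi_{n,p}$ does not change sign (so that $\psi_{n,p}'$ vanishes there by the $C^1$ regularity coming from Lemma~\ref{lem:car}), whereas you start from a whole subinterval of zeros.

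That difference hides one step in your reduction that is not quite right as stated: the property ``$\{\psi_{n,p}\neq 0\}\cap\me$ is, up to a null set, either all of $\me$ or empty'' is \emph{not} equivalent to ``$\psi_{n,p}$ cannot vanish on a subinterval of positive length without vanishing identically on the edge.'' A priori the zero set could be a closed, nowhere dense subset of the edge of positive measure (a fat Cantor set), which contains no interval and so is untouched by your argument, yet would make $|\{\psi_{n,p}\neq 0\}|$ fail to be a sum of edge lengths. The repair uses exactly your own mechanism: if $|\{\psi_{n,p}=0\}\cap\me|>0$, choose a Lebesgue density point $x_0$ of the zero set in the interior of $\me$; since $x_0$ is an accumulation point of zeros and $\psi_{n,p}$ is $C^1$ there (Lemma~\ref{lem:car}), both $\psi_{n,p}(x_0)$ and $\psi_{n,p}'(x_0)$ --- hence the quasi-derivative $v(x_0)=|\psi_{n,p}'(x_0)|^{p-2}\psi_{n,p}'(x_0)$ --- vanish, and Cauchy uniqueness forces $\psi_{n,p}\equiv 0$ on the whole edge. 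With that one-line addition your proof is complete and matches the paper's in substance.
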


\begin{proof}
This follows immediately from the assertion that if $\psi_{n,p} (x) = 0$ for some $x$ in the interior of an edge $\me$, then either $\psi_{n,p}$ changes sign in any open neighbourhood of $x$, or $\psi_{n,p}$ vanishes identically on that edge. Suppose that $\psi_{n,p} (x) = 0$ at some interior point $x \in \me$, and that $\psi_{n,p}$ does \emph{not} change sign at $x$. Then by the smoothness properties of $\psi_{n,p}$ stated in Lemma~\ref{lem:car}, we also have $\psi_{n,p}'(x) = 0$. That is, $\psi_{n,p}$ is a solution of
\begin{displaymath}
\begin{split}
	u' &=|v|^\frac{1}{p-1}\sign v \\
	v' &=-\lambda |u|^{p-1}\sign u
\end{split}\qquad \hbox{in a neighbourhood of }x
\end{displaymath}
with boundary conditions
\begin{displaymath}
	u(x) = v(x) =0.
\end{displaymath}
By \cite[Theorem~3.1]{LanEdm11}, this equation has exactly one smooth solution, which in this case is clearly the zero function. Hence $\psi_{n,p}$ vanishes identically in a neighbourhood of $x$ and so, extending the argument, on the whole metric edge $\me\simeq (0,\ell_\me)$.
\end{proof}

\begin{lemma}
\label{lem:p-first-eigenvalue}
Under Assumption~\ref{assum:basic-p}, for all $n\in\mathbb N$
\begin{displaymath}
	\lambda_{n,p} (\mathcal G) = \lambda_1 (\mathcal G_j),
\end{displaymath}
where the latter is the smallest variational eigenvalue of the $p$-Laplacian on $\mathcal G_j$ with Dirichlet conditions at all the boundary points of $\mathcal G_j$ corresponding to zeros of $\psi_{n,p}$ and natural conditions at all other vertices of $\mathcal G_j$.
\end{lemma}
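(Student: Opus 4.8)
The plan is to mirror the proof of Lemma~\ref{lem:first-eigenvalue} for the linear case, adapting it to the $p$-Laplacian. First I would fix a nodal domain $\mathcal G_j$ of $\psi_{n,p}$ and, without loss of generality (replacing $\psi_{n,p}$ by $-\psi_{n,p}$ if necessary), assume $\psi_{n,p} \geq 0$ on $\mathcal G_j$, with strict positivity except at the boundary points where it vanishes. I would then set $\phi := \psi_{n,p} \chi_{\mathcal G_j}$ and identify it with its restriction to $\mathcal G_j$. The first key step is to verify that $\phi$ solves the $p$-Laplacian eigenvalue equation on $\mathcal G_j$ with eigenvalue $\lambda_{n,p}(\mathcal G)$: edgewise it satisfies the same equation as $\psi_{n,p}$, the natural (nonlinear Kirchhoff) vertex conditions are inherited at all interior vertices of $\mathcal G_j$, and at the boundary points where $\psi_{n,p}=0$ the function $\phi$ satisfies Dirichlet conditions by construction. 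The value $\lambda_{n,p}(\mathcal G)$ can be read directly off the eigenvalue equation satisfied on each edge.

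Next I would argue that $\lambda_{n,p}(\mathcal G)$ is in fact the \emph{smallest} variational eigenvalue of this Dirichlet problem on $\mathcal G_j$. The strategy is to exhibit $\phi$ as a first eigenfunction. Since $\phi$ does not change sign on $\mathcal G_j$ (it is nonnegative by construction), the natural approach is to invoke a characterisation of the first variational eigenvalue of the $p$-Laplacian by sign-definiteness: on a connected domain, an eigenfunction of the $p$-Laplacian that does not change sign must correspond to the smallest eigenvalue, and conversely the first eigenfunction is (up to scalar multiples) the unique sign-definite one. This replaces the orthogonality argument used in the linear case, which is no longer available because eigenfunctions of a nonlinear operator belonging to different variational eigenvalues need not be orthogonal in $L^2$.

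The main obstacle, therefore, is establishing this simplicity-and-sign-definiteness property for the first variational eigenvalue of the $p$-Laplacian on the metric graph $\mathcal G_j$ with the mixed (Dirichlet/natural) vertex conditions at hand. In the linear case this was guaranteed by the Kre\u{\i}n--Rutman framework and the strong maximum principle of \cite{Kur19}; in the nonlinear setting I would appeal instead to the standard theory of the first eigenvalue of the $p$-Laplacian, where the first variational eigenvalue $\lambda_1$ admits a minimisation characterisation $\lambda_1 = \inf \{ \int |u'|^p : u \in W^{1,p}_0, \|u\|_p = 1\}$, its minimiser is sign-definite, and any sign-definite eigenfunction is a first eigenfunction (one direction following from a minimisation/convexity argument, the other from the fact that a sign-changing eigenfunction would, via its positive and negative parts, produce a competitor contradicting minimality). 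I would cite the existing spectral theory for the $p$-Laplacian on metric graphs (\cite{DelRos16,BerKenKur17}) for these facts, or the auxiliary results collected in Appendix~\ref{sec:p-weyl}.

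Granting this, the conclusion follows exactly as before: $\phi$ is a sign-definite eigenfunction of the Dirichlet $p$-Laplacian on $\mathcal G_j$, hence a first eigenfunction, so its eigenvalue $\lambda_{n,p}(\mathcal G)$ equals $\lambda_1(\mathcal G_j)$. The one subtlety worth flagging is that the nonlinear Kirchhoff condition transfers cleanly to $\mathcal G_j$ only because $\phi$ genuinely inherits the edgewise values and derivatives of $\psi_{n,p}$ at interior vertices; I would confirm this using the regularity of $\psi_{n,p}$ asserted in Lemma~\ref{lem:car}, which guarantees that $\psi_{n,p}$ is $C^1$ on each edge and thus that the normal derivatives entering the Kirchhoff balance are well-defined.
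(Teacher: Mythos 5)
Your proposal follows essentially the same route as the paper: restrict $\psi_{n,p}$ to the nodal domain, verify via the weak formulation that it is an eigenfunction on $\mathcal G_j$ for the eigenvalue $\lambda_{n,p}(\mathcal G)$, observe it is sign-definite there, and conclude by the characterisation of the first $p$-Laplacian eigenvalue as the only one admitting a sign-definite eigenfunction. The only difference is bibliographic: the paper obtains that last (and genuinely nontrivial) step by repeating the proof of \cite[Theorem~1.1]{KawLin06} rather than from \cite{DelRos16,BerKenKur17} or the appendix, where this fact is not actually established.
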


\begin{proof}
In analogy with~\eqref{eq:h10}, denote by $W^{1,p}_0 (\mathcal G_j; \partial\mathcal G_j)$ the domain of the functional associated with the eigenvalue problem on $\mathcal G_j$ as described in the assertion; then by choice of $\mathcal G_j$, $\psi_{n,p}|_{\mathcal G_j} \in W^{1,p}_0 (\mathcal G_j; \partial\mathcal G_j)$. As usual, in a slight abuse of notation we will identify $W^{1,p}_0 (\mathcal G_j; \partial\mathcal G_j)$ with a closed subspace of $W^{1,p} (\mathcal G)$ and in particular simply write $\psi_{n,p} \in W^{1,p}_0 (\mathcal G_j; \partial\mathcal G_j)$. We start by observing that $\psi_{n,p}$ is clearly an eigenfunction on $\mathcal G_j$, for the eigenvalue $\lambda_{n,p}(\mathcal G)$, as follows from the fact that
\begin{displaymath}
	\int_{\mathcal G} |\psi_{n,p}'(x)|^{p-2}\psi_{n,p}'(x)\varphi'(x)\,\textrm{d}x = \lambda_{n,p}(\mathcal G) \int_{\mathcal G} |\psi_{n,p}(x)|^{p-2}\psi_{n,p}(x)\varphi(x)\,\textrm{d}x
\end{displaymath}
for all $\varphi \in W^{1,p} (\mathcal G)$ and hence, in particular, for all $\varphi \in W^{1,p}_0 (\mathcal G_j; \partial\mathcal G_j)$.
Moreover, $\psi_{n,p}$ is either strictly positive or strictly negative in (the connected set) $\mathcal G_j \setminus \partial\mathcal G_j$, as is an immediate consequence of the definition of nodal domains. The proof of \cite[Theorem~1.1]{KawLin06} may now be repeated verbatim to show that $\lambda_{n,p}(\mathcal G)$ is in fact the first eigenvalue of the $p$-Laplacian on $\mathcal G_j$ with the desired vertex conditions.
\end{proof}

The following upper bound was proved in \cite[Theorem~3.8]{DelRos16}. Again, this bound extends to the lowest variational eigenvalue of all realisations of the $p$-Laplacian induced by the functional $\mathfrak E_p$ defined on a superset of $W^{1,p}_0(\Graph)$.

\begin{lemma}
\label{lem:p-upper-bound}
Under Assumption~\ref{assum:basic-p}, let $\mV_0$ be a (finite) non-empty set of points of $\Graph$, such that $\Graph \setminus \mV_0$ is connected, and, for $p \in (1,\infty)$, let $\lambda_{1,p} (\Graph; \mV_0)$ be the first eigenvalue of the $p$-Laplacian with Dirichlet conditions at $\mV_0$ and natural conditions at all other vertices. Then
\begin{equation}
\label{eq:p-upper-bound}
	\lambda_{1,p} (\Graph; \mV_0) \leq \frac{p}{q} \left(\frac{\pi_p|\mE|}{|\Graph|}\right)^p.
\end{equation}
\end{lemma}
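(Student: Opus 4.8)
The plan is to bound $\lambda_{1,p}(\Graph;\mV_0)$ from above by inserting a single, well-chosen test function into the Rayleigh quotient and thereby reducing the whole estimate to the one-dimensional $p$-Laplacian on an interval. Recall that the first variational eigenvalue admits the characterisation
\[
\lambda_{1,p}(\Graph;\mV_0) = \inf\left\{ \frac{\int_\Graph |u'|^p \dx}{\int_\Graph |u|^p \dx} : u \in W^{1,p}_0(\Graph;\mV_0),\ u \neq 0 \right\},
\]
so it suffices to produce one admissible $u$ whose Rayleigh quotient is at most the right-hand side of \eqref{eq:p-upper-bound}.

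First I would locate a long edge. Since $|\Graph| = \sum_{\me\in\mE}\ell_\me$ is a sum of $|\mE|$ edge lengths, by averaging there is at least one edge $\me_\ast$ with $\ell_{\me_\ast} \geq |\Graph|/|\mE|$. On $\me_\ast \simeq (0,\ell_{\me_\ast})$ I would place the first Dirichlet eigenfunction $\phi$ of the one-dimensional $p$-Laplacian, i.e.\ a scalar multiple of the generalised sine $\sin_p(\pi_p x/\ell_{\me_\ast})$, and extend it by zero to all of $\Graph$. As $\phi$ vanishes at both endpoints of $\me_\ast$, the zero extension is continuous and lies in $W^{1,p}(\Graph)$; and since it vanishes at every vertex of $\Graph$, in particular at every point of $\mV_0$, it is admissible, $\phi \in W^{1,p}_0(\Graph;\mV_0)$. (In the intended application only boundary vertices of a nodal domain play the role of $\mV_0$, so $\mV_0$ never meets $\interior(\me_\ast)$; were this not so, one would simply replace $\me_\ast$ by the longest subinterval into which $\mV_0$ cuts the longest edge.)

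By construction the Rayleigh quotient of $\phi$ equals the first Dirichlet eigenvalue of the $p$-Laplacian on an interval of length $\ell_{\me_\ast}$, whose classical value is
\[
\frac{p}{q}\left(\frac{\pi_p}{\ell_{\me_\ast}}\right)^p = (p-1)\left(\frac{\pi_p}{\ell_{\me_\ast}}\right)^p
\]
(recall $p/q = p-1$; see, e.g., \cite{LanEdm11}). Combining the variational upper bound with the edge-length estimate $\ell_{\me_\ast} \geq |\Graph|/|\mE|$ and monotonicity in the interval length yields
\[
\lambda_{1,p}(\Graph;\mV_0) \leq \frac{p}{q}\left(\frac{\pi_p}{\ell_{\me_\ast}}\right)^p \leq \frac{p}{q}\left(\frac{\pi_p|\mE|}{|\Graph|}\right)^p,
\]
which is exactly \eqref{eq:p-upper-bound}, recovering \cite[Theorem~3.8]{DelRos16}.

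The argument is essentially elementary, and the only points requiring genuine care are (i) verifying that the zero-extended bump is truly admissible, i.e.\ that it respects the Dirichlet constraint at $\mV_0$ and belongs to the form domain using only $W^{1,p}$-regularity, and (ii) importing the correct one-dimensional constant $\tfrac{p}{q}(\pi_p/L)^p$ and exploiting its monotonicity in $L$ so that the longest edge controls the bound. I expect (i) to be the main obstacle, since it is where the geometry of $\mV_0$ (possibly a set of arbitrary points rather than genuine vertices) interacts with the choice of the supporting edge.
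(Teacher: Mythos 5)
The paper does not actually prove this lemma --- it simply cites \cite[Theorem~3.8]{DelRos16} --- so your test-function argument is a genuinely self-contained alternative, and its core is correct. Locating an edge $\me_\ast$ with $\ell_{\me_\ast}\geq |\Graph|/|\mE|$ by averaging, placing the first Dirichlet eigenfunction $\sin_p(\pi_p x/\ell_{\me_\ast})$ of the one-dimensional $p$-Laplacian on it, and extending by zero gives an admissible function in $W^{1,p}_0(\Graph;\mV_0)$ whenever $\mV_0$ does not meet $\interior(\me_\ast)$; its Rayleigh quotient is exactly $\frac{p}{q}(\pi_p/\ell_{\me_\ast})^p\leq \frac{p}{q}(\pi_p|\mE|/|\Graph|)^p$, and the characterisation of $\lambda_{1,p}$ as the infimum of the Rayleigh quotient (the genus-one case of \eqref{eq:p-n}) closes the argument. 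This is in fact a slightly stronger bound (with $\ell_{\me_\ast}$ in place of the average edge length), and it is exactly the right level of generality for the application in Lemma~\ref{lem:p-nodal-size-est}, where $\mV_0=\partial\Graph_j$ consists of genuine vertices of the nodal domain viewed as a graph with at most $2|\mE|$ edges.

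The one genuine weak point is your fallback for the case where $\mV_0$ contains interior points of the longest edge: replacing $\me_\ast$ by ``the longest subinterval into which $\mV_0$ cuts the longest edge'' does \emph{not} recover the stated constant. If $k$ points of $\mV_0$ lie inside $\me_\ast$, that subinterval may have length only $\ell_{\me_\ast}/(k+1)$, which can be strictly smaller than $|\Graph|/|\mE|$ (e.g.\ a figure-eight with loops of lengths $0.9$ and $0.1$ and a Dirichlet point at the midpoint of the long loop), so the resulting estimate exceeds the right-hand side of \eqref{eq:p-upper-bound}. The clean way to dispose of this is to read the lemma as the paper uses it: the points of $\mV_0$ are declared to be vertices (subdividing edges as necessary), so that $|\mE|$ already counts the subdivided edges and the chosen long edge is automatically free of interior Dirichlet points --- this is consistent with the phrase ``natural conditions at all other vertices'' and with the factor $2|\mE|$ appearing in Lemma~\ref{lem:p-nodal-size-est}. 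With that convention made explicit, your proof is complete; without it, the fallback step as written is a gap.
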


The final auxiliary result we need is an analogue of Lemma~\ref{lem:nodal-size-est}, an estimate from above on the size of the nodal domains (equivalently, a lower bound on $\lambda_{n,p}$), which is itself a direct consequence of the preceding two lemmata. This establishes in particular (together with Lemma~\ref{lem:values-of-p-psin}) that the number of nodal domains does in fact diverge to infinity as $n \to \infty$.

\begin{lemma}
\label{lem:p-nodal-size-est}
Fix $n \in \N$ and let $\Graph_1,\ldots,\Graph_{\nu_{n,p}}$ be the nodal domains of $\psi_{n,p}$. Then for all $j=1,\ldots,\nu_{n,p}$ we have
\begin{equation}
\label{eq:p-nodal-size-est}
	|\Graph_j| \leq \frac{2\pi_p|\mE|p^{1/p}}{(q\lambda_{n,p}(\Graph))^{1/p}}.
\end{equation}
In particular, if $n\in\N$ is large enough, specifically, if $\lambda_{n,p}(\Graph) > \frac{p}{q}\left(\frac{2\pi_p|\mE|}{|\Graph_j|}\right)^p$, then no nodal domain can contain more than one vertex.
\end{lemma}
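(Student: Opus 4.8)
The plan is to mirror the proof of Lemma~\ref{lem:nodal-size-est} exactly, replacing the linear isoperimetric bound by its $p$-analogue from Lemma~\ref{lem:p-upper-bound}. First I would fix a nodal domain $\Graph_j$ of $\psi_{n,p}$ and observe, just as in the linear case, that $\Graph_j$ can contain at most $2|\mE|$ edges (it may contain both ends of an edge of $\Graph$ without containing the whole edge). By Lemma~\ref{lem:p-first-eigenvalue}, $\lambda_{n,p}(\Graph)$ equals $\lambda_1(\Graph_j)$, the first eigenvalue of the $p$-Laplacian on $\Graph_j$ with Dirichlet conditions at the boundary points of $\Graph_j$ corresponding to zeros of $\psi_{n,p}$ and natural conditions at the remaining interior vertices.

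The key step is then to apply Lemma~\ref{lem:p-upper-bound} to $\Graph_j$ itself, taking $\mV_0 = \partial\Graph_j$ (the Dirichlet boundary). This requires that $\Graph_j \setminus \partial\Graph_j$ be connected, which holds by the definition of a nodal domain as the closure of a single connected component of $\{\psi_{n,p} \neq 0\}$; it also requires $\mV_0$ nonempty, which is guaranteed for large $n$ since $\psi_{n,p}$ cannot be an eigenfunction of the whole graph supported on a single nodal domain without a Dirichlet boundary. Using $|\mE(\Graph_j)| \leq 2|\mE|$ in \eqref{eq:p-upper-bound}, this yields
\begin{displaymath}
	\lambda_{n,p}(\Graph) = \lambda_1(\Graph_j) \leq \frac{p}{q}\left(\frac{\pi_p \cdot 2|\mE|}{|\Graph_j|}\right)^p.
\end{displaymath}
Rearranging to isolate $|\Graph_j|$ gives
\begin{displaymath}
	|\Graph_j|^p \leq \frac{p}{q}\cdot\frac{(2\pi_p|\mE|)^p}{\lambda_{n,p}(\Graph)},
\end{displaymath}
and taking $p$-th roots produces $|\Graph_j| \leq \frac{2\pi_p|\mE|\,p^{1/p}}{(q\lambda_{n,p}(\Graph))^{1/p}}$, which is precisely \eqref{eq:p-nodal-size-est}.

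For the final assertion, I would argue exactly as in Lemma~\ref{lem:nodal-size-est}: since $\lambda_{n,p}(\Graph) \to \infty$ as $n \to \infty$ (by the Weyl asymptotics of Theorem~\ref{thm:p-weyl} in the appendix, or directly from the Ljusternik--Schnirelmann characterisation), the right-hand side of \eqref{eq:p-nodal-size-est} tends to zero. The stated threshold $\lambda_{n,p}(\Graph) > \frac{p}{q}\left(\frac{2\pi_p|\mE|}{|\Graph_j|}\right)^p$ is simply the inequality \eqref{eq:p-nodal-size-est} read backwards with $|\Graph_j|$ replaced by $\ell_{\min}$; once it holds we obtain $|\Graph_j| < \ell_{\min}$ for every $j$, so no nodal domain contains an entire edge, and hence none can contain more than one vertex.

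The main obstacle I anticipate is purely bookkeeping around the application of Lemma~\ref{lem:p-upper-bound}: one must verify that the vertex conditions on $\Graph_j$ (Dirichlet on $\partial\Graph_j$, natural elsewhere) genuinely fall within its hypotheses, and in particular that $\Graph_j \setminus \partial\Graph_j$ is connected and $\partial\Graph_j \neq \emptyset$. Both follow from the structure of nodal domains, but the nonemptiness of the Dirichlet boundary deserves a brief word, since a nodal domain with no Dirichlet boundary would have to be a full connected component, which is excluded once $\psi_{n,p}$ genuinely possesses a zero (guaranteed for $n$ large by Lemma~\ref{lem:values-of-p-psin} together with the divergence of $\lambda_{n,p}$). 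The algebraic manipulation of the exponents, involving the factor $\frac{p}{q}$ and the resulting $p^{1/p}/q^{1/p}$, is routine and presents no genuine difficulty.
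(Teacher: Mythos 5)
Your proposal is correct and follows essentially the same route as the paper: apply Lemma~\ref{lem:p-first-eigenvalue} to identify $\lambda_{n,p}(\Graph)$ with the first eigenvalue of $\Graph_j$, bound the number of edges of $\Graph_j$ by $2|\mE|$, invoke Lemma~\ref{lem:p-upper-bound} on $\Graph_j$, and rearrange; the concluding step via $|\Graph_j|<\ell_{\min}$ is likewise identical to the linear case. Your extra remarks on verifying the hypotheses of Lemma~\ref{lem:p-upper-bound} (connectedness of $\Graph_j\setminus\partial\Graph_j$ and nonemptiness of the Dirichlet set) are sound and, if anything, more careful than the paper's one-line proof.
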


\begin{proof}
Fix a nodal domain $\Graph_j$, then since $\Graph_j$ cannot have more than $2|\mE|$ edges, by Lemma~\ref{lem:p-first-eigenvalue} and Lemma~\ref{lem:p-upper-bound}, the latter applied to $\Graph_j$, we have
\begin{displaymath}
	\lambda_{n,p} (\Graph) = \lambda_{1,p}(\Graph_j;\partial\Graph_j) [ = \lambda_{1,p}^D (\Graph_j)] \leq \frac{p}{q} \left(\frac{\pi_p|\mE|}{|\Graph|}\right)^p.
\end{displaymath}
Rearranging yields \eqref{eq:p-nodal-size-est}. The other assertion is clear.
\end{proof}

We can now formulate a version of the central Lemma~\ref{lem:nu-lambda} for the $p$-Laplacian.

\begin{lemma}
\label{lem:p-nu-lambda}
For all sufficiently large $n \in \N$, we have
\begin{equation}
\label{eq:p-nu-lambda-estimates}
	\frac{|\psi_{n,p} \neq 0|}{\pi_p} \cdot \left(\frac{q\lambda_{n,p}(\Graph)}{p}\right)^{1/p} - (2|\mE|-1)|\mV|
	\leq \nu_{n,p} \leq \frac{|\psi_{n,p} \neq 0|}{\pi_p} \cdot \left(\frac{q\lambda_{n,p}(\Graph)}{p}\right)^{1/p} + |\mV|
\end{equation}
\end{lemma}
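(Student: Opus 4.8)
The plan is to transcribe the proof of Lemma~\ref{lem:nu-lambda} almost line for line, exploiting one genuine simplification of the present setting: since there is no potential, the relevant one-dimensional eigenvalue is known \emph{exactly}, so the two-sided gap that appears in the Schrödinger case collapses and all interval nodal domains have precisely the same length. Throughout I would abbreviate $L := \pi_p\bigl(\tfrac{p}{q\lambda_{n,p}(\Graph)}\bigr)^{1/p}$, so that the main term in the asserted estimate is exactly $\tfrac{|\{\psi_{n,p}\neq 0\}|}{\pi_p}\bigl(\tfrac{q\lambda_{n,p}(\Graph)}{p}\bigr)^{1/p} = \tfrac{|\{\psi_{n,p}\neq 0\}|}{L}$. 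Note that $\lambda_{n,p}(\Graph)\to\infty$ (by the Weyl asymptotics, Theorem~\ref{thm:p-weyl}), so the phrase ``sufficiently large $n$'' is meaningful.

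First I would record the decomposition $|\{\psi_{n,p}\neq 0\}| = \sum_{j=1}^{\nu_{n,p}}|\Graph_j|$, which is immediate from the definition of the nodal domains. By Lemma~\ref{lem:p-nodal-size-est}, for all sufficiently large $n$ every nodal domain contains at most one vertex of $\Graph$; I would then reorder the nodal domains so that $\Graph_1,\dots,\Graph_m$ (with $m\le|\mV|$) are exactly those containing a vertex, while $\Graph_{m+1},\dots,\Graph_{\nu_{n,p}}$ are genuine subintervals of single edges containing no vertex at all. The latter therefore carry Dirichlet conditions at both endpoints (both being interior zeros of $\psi_{n,p}$), whereas any domain touching a leaf or a branching vertex is safely absorbed into the first $m$.

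The key step is to pin down the length of each interval nodal domain. By Lemma~\ref{lem:p-first-eigenvalue} one has $\lambda_{n,p}(\Graph) = \lambda_{1}(\Graph_j)$ for every $j$, with Dirichlet conditions at both endpoints of the interval $\Graph_j$; since the first Dirichlet eigenvalue of the $p$-Laplacian on an interval of length $\ell$ equals exactly $\tfrac{p}{q}\bigl(\tfrac{\pi_p}{\ell}\bigr)^p$ (the upper bound is Lemma~\ref{lem:p-upper-bound} applied with a single edge, $|\mE(\Graph_j)|=1$, and the matching lower bound is the explicit one-dimensional $p$-trigonometric computation), solving for $\ell$ yields $|\Graph_j| = L$ for every interval nodal domain. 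For the at most $|\mV|$ vertex-containing domains I would instead use only the crude bound of Lemma~\ref{lem:p-nodal-size-est}, which after dividing by $L$ reads $|\Graph_j|/L \le 2|\mE|$.

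Finally I would assemble these. Writing $|\{\psi_{n,p}\neq 0\}| = (\nu_{n,p}-m)L + \sum_{j=1}^m|\Graph_j|$ and dividing by $L$ gives $\nu_{n,p} = m + \tfrac{|\{\psi_{n,p}\neq 0\}|}{L} - \sum_{j=1}^m \tfrac{|\Graph_j|}{L}$. The upper bound is immediate upon dropping the nonnegative sum and using $m\le|\mV|$. For the lower bound I would estimate $\sum_{j=1}^m\bigl(\tfrac{|\Graph_j|}{L}-1\bigr) \le m(2|\mE|-1) \le (2|\mE|-1)|\mV|$ via the crude bound, which rearranges to exactly $\nu_{n,p} \ge \tfrac{|\{\psi_{n,p}\neq 0\}|}{L} - (2|\mE|-1)|\mV|$. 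I expect the only real subtlety to be the exact evaluation of the one-dimensional first Dirichlet eigenvalue, which replaces the cleaner $\pi^2/\ell^2$ of the linear case and is precisely what makes the interval lengths exact; the remaining work is bookkeeping to produce the precise constants $|\mV|$ and $(2|\mE|-1)|\mV|$, and is otherwise a direct translation of the $p=2$ argument.
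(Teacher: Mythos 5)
Your proof is correct and follows essentially the same route as the paper's: decompose $|\{\psi_{n,p}\neq 0\}|$ over the nodal domains, use Lemma~\ref{lem:p-nodal-size-est} to isolate at most $|\mV|$ vertex-containing domains, evaluate the length of each interval nodal domain exactly via Lemma~\ref{lem:p-first-eigenvalue} and the explicit first Dirichlet eigenvalue of the $p$-Laplacian on an interval, and control the remaining domains by the crude bound $|\Graph_j|\le 2|\mE|L$. The only (harmless) difference is that you allow $m\le|\mV|$ vertex-containing domains where the paper assumes exactly $|\mV|$; your bookkeeping for the constants $|\mV|$ and $(2|\mE|-1)|\mV|$ matches the paper's.
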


Concretely, the condition on $\lambda_{n,p}(\Graph)$ from Lemma~\ref{lem:p-nodal-size-est} is enough to ensure that \eqref{eq:p-nu-lambda-estimates} holds.

\begin{proof}
We suppose $n$ is large enough that there are in fact $|\mV|$ nodal domains containing exactly one vertex of $\Graph$, while the rest contain no vertices; that this is possible is guaranteed by Lemma~\ref{lem:p-nodal-size-est}. Let $\Graph_1, \ldots, \Graph_{\nu_{n,p}}$ be the nodal domains of $\psi_{n,p}$. We assume that $\Graph_1,\ldots,\Graph_{|\mV|}$ each contain a vertex, while the rest do not; then each $\Graph_j$ is an interval with Dirichlet conditions at its endpoints if $j > |\mV|$, and in this case
\begin{displaymath}
	\lambda_{n,p}(\Graph) = \lambda_{1,p}(\Graph_j;\partial\Graph_j) = \frac{p}{q} \left(\frac{\pi_p}{|\Graph_j|}\right)^p,
\end{displaymath}
i.e., $|\Graph_j| = \pi_p \left(\frac{p}{q\lambda_{n,p}(\Graph)}\right)^{1/p}$. Hence, as in the proof of Lemma~\ref{lem:nu-lambda}, using the definition of the nodal domains,
\begin{displaymath}
	|\{\psi_{n,p}\neq 0\}| = \sum_{j=1}^{\nu_{n,p}}|\Graph_j| = \sum_{j=1}^{|\mV|} |\Graph_j| + (\nu_{n,p}-|\mV|)
	\pi_p \left(\frac{p}{q\lambda_{n,p}(\Graph)}\right)^{1/p}
\end{displaymath}
The sum on the right-hand side is non-negative and may be controlled from above using Lemma~\ref{lem:p-nodal-size-est}; this yields
\begin{displaymath}
	(\nu_{n,p}-|\mV|) \pi_p \left(\frac{p}{q\lambda_{n,p}(\Graph)}\right)^{1/p}\leq |\{\psi_{n,p}\neq 0\}|
		\leq (\nu_{n,p}-|\mV|) \pi_p \left(\frac{p}{q\lambda_{n,p}(\Graph)}\right)^{1/p} + 2\pi_p|\mV||\mE|\left(\frac{p}{q\lambda_{n,p}(\Graph)}\right)^{1/p}.
\end{displaymath}
Rearranging yields \eqref{eq:p-nu-lambda-estimates}.
\end{proof}

\begin{proof}[Proof of Theorem~\ref{thm:pleijel-p} and of Proposition~\ref{prop:accumulation} for the $p$-Laplacian]
Upon combining the result of Lemma~\ref{lem:p-nu-lambda} with the Weyl asymptotics of Theorem~\ref{thm:p-weyl}, we obtain
\begin{displaymath}
	\nu_{n,p} = \frac{|\{\psi_{n,p}\neq 0\}|}{|\Graph|} n + o(n) \qquad \text{as } n \to \infty,
\end{displaymath}
which in particular proves Proposition~\ref{prop:accumulation} for the $p$-Laplacian. Lemma~\ref{lem:values-of-p-psin} now yields \eqref{eq:accumulation-p}; the other assertions of Theorem~\ref{thm:pleijel-p} follow immediately.
\end{proof}

\appendix

\section{Weyl's law for the $p$-Laplacian on metric graphs}
\label{sec:p-weyl}

The goal of this section is, firstly, to recall briefly the construction of the variational eigenvalues of the $p$-Laplacian (with natural vertex conditions, that is, continuity and an appropriate $p$-version of the Kirchhoff condition); this is well known on intervals and domains, and nothing changes in the case of metric graphs (see also \cite{DelRos16}); secondly, we will show that the Weyl asymptotics known for the $p$-Laplacian eigenvalues on the interval also holds on metric graphs. This is a simple application of Dirichlet--Neumann bracketing.

We recall that the $n$-th variational eigenvalue of the $p$-Laplacian on a graph $\Graph$ with natural vertex conditions, $p \in (1,\infty)$, may be characterised variationally in terms of the Krasnosel'skii genus. More precisely, analogously to \cite[Section~5]{BinDra03}, see also \cite[Section~3]{DraRob02},  we consider the manifold
\begin{displaymath}
	\mathcal{S} := \left\{ f \in W^{1,p} (\Graph) : \|f\|_{L^p(\Graph)}^p = 1 \right\},
\end{displaymath}
and for a closed, symmetric, non-empty set $\mathcal{A} \subset \mathcal{S}$ its Krasnosel'skii genus $\gamma (\mathcal{A}) \in \N$ by
\begin{displaymath}
	\gamma(\mathcal{A}) := \inf \{k \in \N: \text{ there exists } \Phi: \mathcal{A} \to \mathbb{S}^{k} \text{ continuous and odd} \},
\end{displaymath}
where \(\mathbb S^k\) denotes the unit sphere in \(\mathbb R^k\) for \(k\in\mathbb N\) (or $\gamma (\mathcal{A}) = \infty$ if this infimum is infinite). Finally, for every $n \in \N$ we set $\mathcal{F}_n := \{ \mathcal{A} \subset \mathcal{S}: \gamma (\mathcal{A}) \geq n\}$. 
Then we may define the $n$-th variational eigenvalue $\lambda_{n,p} (\Graph)$ of the $p$-Laplacian on $\Graph$ with natural vertex conditions by
\begin{equation}
\label{eq:p-n}
	\lambda_{n,p} (\Graph) = \inf_{\mathcal{A} \in \mathcal{F}_n} \sup_{f \in \mathcal{A}} \int_{\Graph} |f'(x)|^p\,\textrm{d}x.
\end{equation}
That this does indeed give rise to an infinite sequence of eigenvalues on any compact metric graph $\Graph$ follows from the same argument as the one used in \cite{BinDra03}, see also \cite{DraRob99,DraRob02}. While \textit{a priori} $(\lambda_{n,p} (\Graph))_{n\in\mathbb N}$ is just a sequence of critical points of a certain functional, mimicking the proof of~\cite[Theorem~2.1]{BinRyn08} one can show by known methods that each such \textit{variational eigenvalue} is actually associated with an eigenfunction in the following weak sense.
\begin{lemma}\label{lem:car}
For each $n\in \mathbb N$ there exists a (so-called \textit{Carathéodory}) eigenfunction associated with $\lambda = \lambda_{n,p}(\Graph)$, i.e., a non-zero solution $\psi_{n,p}=u$ of the system
\[
\begin{split}
u' &=|v|^\frac{1}{p-1}\sign v \\
v' &=-\lambda |u|^{p-1}\sign u.
\end{split}
\]
such that $u$ and $v$ satisfy the continuity and Kirchhoff-type vertex conditions, respectively. In particular, $\psi_{n,p}$ is a real, absolutely continuous function, and so is $|\psi_{n,p}|^{p-1}\sign \psi_{n,p}$.
\end{lemma}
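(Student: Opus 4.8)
The plan is to realise $\lambda_{n,p}(\Graph)$, defined in~\eqref{eq:p-n} through the Krasnosel'skii genus, as a critical value of the even $C^1$ functional $\Phi(f):=\int_\Graph|f'|^p\dx$ restricted to the $L^p$-sphere $\mathcal S=\{\,f\in W^{1,p}(\Graph):\|f\|_{L^p(\Graph)}^p=1\,\}$, and then to produce an associated critical point by Ljusternik--Schnirelmann theory, following the scheme of~\cite[Theorem~2.1]{BinRyn08}. First I would check the abstract hypotheses on $\Graph$: that $\Phi$ and the constraint functional $f\mapsto\|f\|_{L^p}^p$ are even and continuously differentiable on $W^{1,p}(\Graph)$, that $\mathcal S$ is a complete $C^1$-Finsler manifold, and that $\Phi$ is bounded below and coercive on $\mathcal S$. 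None of this sees the graph structure and it is identical to the interval case treated in~\cite{BinDra03,DraRob02}.

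The compactness input, and the only place where the metric-graph geometry enters the abstract part, is the compact embedding $W^{1,p}(\Graph)\hookrightarrow L^p(\Graph)$ on a compact metric graph. Using it I would verify the Palais--Smale condition for $\Phi|_{\mathcal S}$, after which the standard deformation lemma together with the genus minimax in~\eqref{eq:p-n} guarantees that each value $\lambda_{n,p}(\Graph)$ is attained at a critical point $u\in\mathcal S$. The Lagrange multiplier rule then gives a nonzero $u\in W^{1,p}(\Graph)$ with
\[
\int_\Graph|u'|^{p-2}u'\varphi'\dx=\mu\int_\Graph|u|^{p-2}u\,\varphi\dx\qquad\text{for all }\varphi\in W^{1,p}(\Graph),
\]
and testing with $\varphi=u$ identifies $\mu=\Phi(u)=\lambda_{n,p}(\Graph)$, so the weak Euler--Lagrange equation holds with $\lambda=\lambda_{n,p}(\Graph)$.

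It remains to extract the Carathéodory system, the vertex conditions and the regularity from this weak formulation, which is the genuinely graph-specific step. Setting $v:=|u'|^{p-2}u'$ gives $|v|=|u'|^{p-1}$ and $\sign v=\sign u'$, so $u'=|v|^{1/(p-1)}\sign v$, the first equation. Choosing $\varphi$ supported in the interior of a single edge shows that $v'=-\lambda|u|^{p-1}\sign u$ holds distributionally on each edge, the second equation; since $u\in W^{1,p}(\Graph)\subset C(\Graph)$ the right-hand side is continuous, whence $v\in C^1$ and, bootstrapping through $u'=|v|^{1/(p-1)}\sign v$, also $u\in C^1$ on each edge. This yields the asserted absolute continuity of $u$ and of its quasi-derivative $|u'|^{p-2}u'$ (equivalently of $|u|^{p-1}\sign u=-v'/\lambda$). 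Continuity of $u$ across each vertex is built into the definition of $W^{1,p}(\Graph)$; the nonlinear Kirchhoff-type condition on $v$ is read off by integrating by parts edge by edge and inserting test functions that do not vanish at a prescribed vertex, the edgewise equation cancelling the bulk term and leaving precisely $\sum_{\me\ni\mv}v_\me(\mv)=0$ at each $\mv\in\mV$.

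The main obstacle is the abstract Ljusternik--Schnirelmann step -- verifying the Palais--Smale condition and invoking the deformation lemma for a nonsmooth, genuinely nonlinear functional on the Banach manifold $\mathcal S$ -- which is exactly the part that mimics~\cite{BinRyn08} and carries the analytic weight. By contrast, the metric-graph ingredients (the compact embedding and the edgewise extraction of the ODE system with its vertex conditions) are routine, the only properly new point being the derivation of the Kirchhoff-type condition at the vertices.
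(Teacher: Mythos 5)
Your proposal is correct and follows essentially the same route as the paper, which itself establishes this lemma only by asserting that one can mimic the Ljusternik--Schnirelmann argument of \cite[Theorem~2.1]{BinRyn08}; your fleshed-out version (Palais--Smale via the compact embedding $W^{1,p}(\Graph)\hookrightarrow L^p(\Graph)$, the genus minimax, and the edgewise extraction of the Carathéodory system together with the Kirchhoff-type condition) is exactly the intended ``known methods''. The only point to tighten is the parenthetical ``equivalently'': absolute continuity of $v$ does not by itself transfer to $v'=-\lambda|u|^{p-1}\sign u$, though this does follow since $u$ is edgewise $C^1$ and $(p-1)|u|^{p-2}u'$ is locally integrable for every $p>1$.
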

In particular, and with the terminology of~\cite{BinRyn08}: like on intervals with Dirichlet or Neumann boundary conditions, each variational eigenvalue is a Carathéodory eigenvalue, too.

We also define the corresponding eigenvalues in the case that all vertices of $\Graph$ are equipped with either a Dirichlet or a Neumann condition, in which case $\Graph$ decomposes into the disjoint union of $|\mE|$ edges, or intervals; this obviously includes the case $|\mE|=1$ where $\Graph$ is just a (bounded) interval itself. We define the natural analogues of $\mathcal{S}$, namely
\begin{displaymath}
\begin{aligned}
	\mathcal{S}^D &:= \left\{ f \in W^{1,p}_0 (\Graph) : \|f\|_{L^p(\Graph)}^p = 1 \right\},\\
	\mathcal{S}^N &:= \left\{ f \in \bigoplus_{\me\in\mE} W^{1,p} (0,\ell_\me) : \|f\|_{L^p(\Graph)}^p = 1 \right\},
\end{aligned}
\end{displaymath}
where $W^{1,p}_0 (\Graph)$ is, analogously to $H^1_0 (\Graph):=H^1(\mathcal G; \mV)$ in~\eqref{eq:h10}, the space of all functions in $W^{1,p}(\Graph)$ vanishing at \textit{all} vertices, and $\bigoplus_{\me\in\mE} W^{1,p} (0,\ell_\me)$ is to be identified with a superset of $W^{1,p} (\Graph)$ in the obvious way. Then, defining the Krasnosel'skii genus in the same way as above, and finally
\begin{displaymath}
	\mathcal{F}_n^{D,N} := \{ \mathcal{A} \subset \mathcal{S}^{D,N}: \gamma (\mathcal{A}) \geq n\},
\end{displaymath}
we define the respective $n$-th variational eigenvalues by
\begin{equation}
\label{eq:p-n-dn}
	\lambda_{n,p}^{D,N} (\Graph) = \inf_{\mathcal{A} \in \mathcal{F}_n^{D,N}} \sup_{f \in \mathcal{A}} \int_{\Graph} |f'(x)|^p\,\textrm{d}x.
\end{equation}
Again, it is easy to see that in both cases there is a sequence of eigenvalues; this is proved explicitly in \cite[Theorems 3.3 and 3.4]{LanEdm11} for the $p$-Laplacian on intervals (but it makes no difference if we consider a disjoint union of intervals). We may also consider eigenvalues $\lambda_{n,p}^D (\Graph; \mV_0)$ with a Dirichlet condition imposed at some subset $\mV_0$ of the vertices and natural conditions at the rest; all the definitions are analogous and we do not go into details.

The following Dirichlet--Neumann bracketing principle is an immediate consequence of the respective eigenvalue definitions.

\begin{lemma}
\label{lem:p-bracketing}
Fix $p \in (1,\infty)$ and $\Graph$. With the notation introduced above, we have
\begin{displaymath}
	\lambda_{n,p}^N (\Graph) \leq \lambda_{n,p} (\Graph) \leq \lambda_{n,p}^D (\Graph)
\end{displaymath}
for all $n \geq 1$.
\end{lemma}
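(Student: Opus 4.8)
The plan is to read off both inequalities directly from the variational definitions \eqref{eq:p-n} and \eqref{eq:p-n-dn}, exploiting only the nesting of the underlying constraint manifolds together with the monotonicity of an infimum. The starting observation is the chain of inclusions of function spaces
\begin{displaymath}
	W^{1,p}_0(\Graph) \subset W^{1,p}(\Graph) \subset \bigoplus_{\me \in \mE} W^{1,p}(0,\ell_\me),
\end{displaymath}
which, upon intersecting with the unit $L^p$-sphere, yields $\mathcal{S}^D \subset \mathcal{S} \subset \mathcal{S}^N$. Since the functional $f \mapsto \int_{\Graph} |f'|^p \dx$ is literally the same in all three problems, the only difference lies in the collection of admissible sets over which one minimises.

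Next I would note that the Krasnosel'skii genus $\gamma(\mathcal{A})$ is an intrinsic invariant of the symmetric set $\mathcal{A}$ equipped with its antipodal action $f \mapsto -f$: it depends only on the existence of continuous odd maps $\mathcal{A} \to \mathbb{S}^k$, and the subspace topology that $\mathcal{A}$ inherits is unchanged whether we regard it inside $\mathcal{S}^D$, inside $\mathcal{S}$, or inside $\mathcal{S}^N$, as all three carry the restriction of the same $W^{1,p}$-norm. Consequently, if $\mathcal{A} \subset \mathcal{S}^D$ satisfies $\gamma(\mathcal{A}) \geq n$, then, viewing $\mathcal{A}$ as a subset of $\mathcal{S}$, we still have $\gamma(\mathcal{A}) \geq n$; and similarly for the passage from $\mathcal{S}$ to $\mathcal{S}^N$. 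This gives the inclusions of admissible families $\mathcal{F}_n^D \subset \mathcal{F}_n \subset \mathcal{F}_n^N$.

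Finally I would invoke the elementary monotonicity of an infimum: minimising the same quantity $\sup_{f \in \mathcal{A}} \int_{\Graph} |f'|^p \dx$ over a \emph{larger} family of sets can only decrease or leave unchanged the resulting value. Applied to $\mathcal{F}_n^D \subset \mathcal{F}_n$ this yields $\lambda_{n,p}(\Graph) \leq \lambda_{n,p}^D(\Graph)$, and applied to $\mathcal{F}_n \subset \mathcal{F}_n^N$ it yields $\lambda_{n,p}^N(\Graph) \leq \lambda_{n,p}(\Graph)$; together these are exactly the claimed bracketing. I do not expect any genuine obstacle: the whole content is the nesting of constraint manifolds plus the topological invariance of the genus. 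The only point deserving a moment's care is confirming that the genus really is unchanged under the inclusions—that is, that an admissible odd map witnessing $\gamma(\mathcal{A}) \geq n$ for the smaller problem remains admissible for the larger one—but this is immediate, since enlarging the ambient space alters neither the set $\mathcal{A}$ nor its inherited topology.
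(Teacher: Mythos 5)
Your argument is correct and is essentially identical to the paper's own proof, which likewise deduces $\mathcal{F}_n^D \subset \mathcal{F}_n \subset \mathcal{F}_n^N$ from $\mathcal{S}^D \subset \mathcal{S} \subset \mathcal{S}^N$ and then appeals to the min-max characterisations. Your extra remark that the Krasnosel'skii genus of a fixed set $\mathcal{A}$ is unaffected by enlarging the ambient sphere is a worthwhile point of care that the paper leaves implicit.
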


\begin{proof}
We observe that $\mathcal{S}^D \subset \mathcal{S} \subset \mathcal{S}^N$, whence $\mathcal{F}_n^D \subset \mathcal{F}_n \subset \mathcal{F}_n^N$. The statement is now an immediate consequence of the characterisations \eqref{eq:p-n} and \eqref{eq:p-n-dn}.
\end{proof}

\begin{theorem}[Weyl asymptotics]
\label{thm:p-weyl}
Fix $p \in (1,\infty)$ and suppose the graph $\Graph$ has total length $|\Graph|$. Then the $n$-th variational eigenvalue $\lambda_n (\Graph)$ satisfies
\begin{equation}
\label{eq:p-weyl}
	\lambda_{n,p} (\Graph) = (p-1)\left(\frac{\pi_p}{|\Graph|}\right)^p n^p + o(n^p) \qquad \text{as } n \to \infty,
\end{equation}
where we recall $\pi_p = \frac{2\pi}{p \sin(\frac{\pi}{p})}$.
\end{theorem}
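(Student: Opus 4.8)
The plan is to extract the asymptotics from the Dirichlet--Neumann bracketing of Lemma~\ref{lem:p-bracketing}, since imposing a Dirichlet or a Neumann condition at \emph{every} vertex disconnects $\Graph$ into the disjoint union of its $|\mE|$ edges, on each of which the $p$-Laplacian eigenvalues are explicit. First I would recall the one-dimensional facts (see \cite{LanEdm11}): on an interval of length $\ell$ the Dirichlet eigenvalues of the $p$-Laplacian are exactly $(p-1)(n\pi_p/\ell)^p$, $n \in \N$, while the Neumann eigenvalues are the same sequence shifted by one index (the bottom one being $0$). Since $W^{1,p}$ of a disjoint union splits as a direct sum and the Krasnosel'skii genus respects this decomposition, the variational eigenvalues $\lambda_{n,p}^D(\Graph)$ (resp.\ $\lambda_{n,p}^N(\Graph)$) of the fully decomposed problems are precisely the increasing rearrangement of the union, over all edges $\me$, of the interval eigenvalues on $(0,\ell_\me)$.

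The second step is to pass to counting functions. Writing $N^D(\lambda) := \#\{n : \lambda_{n,p}^D(\Graph) \leq \lambda\}$ and using additivity over edges,
\begin{displaymath}
N^D(\lambda) = \sum_{\me \in \mE} \left\lfloor \frac{\ell_\me}{\pi_p}\left(\frac{\lambda}{p-1}\right)^{1/p} \right\rfloor = \frac{|\Graph|}{\pi_p}\left(\frac{\lambda}{p-1}\right)^{1/p} + O(1) \qquad \text{as } \lambda \to \infty,
\end{displaymath}
because $\sum_{\me} \ell_\me = |\Graph|$ and each of the $|\mE|$ floor functions contributes an error of at most $1$. The analogous computation for $N^N(\lambda)$ differs only by the finitely many bottom eigenvalues and hence gives the identical leading term with an $O(1)$ remainder. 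Inverting this relation at $\lambda = \lambda_{n,p}^D(\Graph)$, where $N^D$ equals $n$ up to an $O(1)$ ambiguity coming from multiplicities, yields $n = \frac{|\Graph|}{\pi_p}(\lambda_{n,p}^D(\Graph)/(p-1))^{1/p} + O(1)$, that is,
\begin{displaymath}
\lambda_{n,p}^D(\Graph) = (p-1)\left(\frac{\pi_p}{|\Graph|}\right)^p (n + O(1))^p = (p-1)\left(\frac{\pi_p}{|\Graph|}\right)^p n^p + o(n^p),
\end{displaymath}
and identically for $\lambda_{n,p}^N(\Graph)$. Squeezing $\lambda_{n,p}(\Graph)$ between these two via Lemma~\ref{lem:p-bracketing} then gives \eqref{eq:p-weyl}.

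I expect the only genuinely delicate point to be justifying that the variational spectrum of a disjoint union of intervals is exactly the merged sorted spectrum of the pieces: this is transparent for linear operators, but for the genus-based eigenvalues it rests on the interaction of the Krasnosel'skii genus with the direct-sum structure of $W^{1,p}$, and I would either invoke the subadditivity and monotonicity properties of $\gamma$ directly or appeal to \cite{LanEdm11}, where the one-dimensional theory (including disjoint unions) is developed. Everything else---the explicit interval eigenvalues, the additive counting function, and the inversion---is routine, and the $o(n^p)$ remainder could in fact be sharpened to $O(n^{p-1})$ with no extra work, though the stated form suffices.
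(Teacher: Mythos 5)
Your proposal is correct and follows essentially the same route as the paper: the one-dimensional Dirichlet and Neumann asymptotics from \cite{LanEdm11}, transferred to the disjoint union of edges and then squeezed via the Dirichlet--Neumann bracketing of Lemma~\ref{lem:p-bracketing}. The paper's own proof is in fact terser --- it simply asserts that the interval asymptotics pass to a disjoint collection of intervals --- whereas you make the counting-function bookkeeping explicit and correctly flag the identification of the decoupled variational (genus-based) spectrum with the merged interval spectra as the one point needing justification.
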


A corresponding Weyl asymptotics for the Dirichlet $p$-Laplacian on general domains in $\R^n$ was established only very recently, see \cite{Maz19}.

\begin{proof}
We first observe that the Weyl asymptotics \eqref{eq:p-weyl} holds for the $p$-Laplacian on an interval with both Dirichlet and Neumann boundary conditions (see \cite[Theorems~3.3 and 3.4]{LanEdm11}. Hence it also holds in the case that $\Graph$ is a disjoint collection of intervals, equivalently, for any graph $\Graph$ it holds for $\lambda_{n,p}^N (\Graph)$ and $\lambda_{n,p}^D (\Graph)$. The conclusion of the theorem now follows immediately from Lemma~\ref{lem:p-bracketing}.
\end{proof}

\section{An estimate on the first eigenvalue of general Schrödinger operators}
\label{app:lambda1-est}

In this appendix we give an estimate on the first eigenvalue $\lambda_1 (\Graph)$ of any Schrödinger operator $\Op = \Op (q, \VM, w, \mV_0)$ of the form introduced in Section~\ref{sec:gen-setting}, on any compact metric graph. Estimates of this level seem to be new at this level of generality and may be of some independent interest, although there is considerable room for improvement. In practice we will take Assumption~\ref{assum:basic}; however, the statements and proofs are all valid for general $q \in L^1 (\Graph)$, not necessarily positive, in which case the norm $\|q\|_1$ may be replaced by $\|q_+\|_1$, the norm of the positive part of $q$ (this is a trivial consequence of the variational characterisation of $\lambda_1$).

\begin{proposition}
\label{prop:lambda1-est}
Keeping the notation of Sections~\ref{sec:gen-setting} and~\ref{sec:plei-schr}, under Assumption~\ref{assum:basic} we have
\begin{equation}\label{eq:pseudoKKMM}
	\lambda_1 (\Graph) \leq \left(\frac{\pi|\mE|}{|\Graph|} + \|q\|_1\right)^2 - \|q\|_1^2.
\end{equation}
\end{proposition}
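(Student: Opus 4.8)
The plan is to exploit the variational characterisation of the first eigenvalue,
\[
\lambda_1(\Graph) = \inf_{0\neq f\in D(\form)} \frac{\form(f)}{\|f\|_{L^2(\Graph)}^2},
\]
and to bound the infimum by a single, carefully chosen test function. The key observation is that it suffices to test with functions lying in $H^1_0(\Graph)$, the space of $H^1$-functions vanishing at \emph{all} vertices. Indeed, $H^1_0(\Graph)\subset D(\form)$, and every such $f$ satisfies $f(\mv)=0$ for all $\mv\in\mV$, so the entire vertex contribution $\sum_{\mv,\mw}(\VM_{\mv\mw}f(\mw),f(\mv))$ vanishes and the weighted continuity condition \eqref{eq:wei-cnd} holds trivially. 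Consequently the form collapses to $\form(f)=\int_\Graph(|f'|^2+q|f|^2)\dx$, which is exactly why the resulting bound will be independent of $\VM$, $w$ and $\mV_0$.

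For the test function I would take the longest edge $\me^\ast$, with $\ell_{\max}:=\ell_{\me^\ast}=\max_{\me\in\mE}\ell_\me$, identify it with $(0,\ell_{\max})$, and set $f(x)=\sin(\pi x/\ell_{\max})$ on $\me^\ast$ and $f\equiv 0$ on the remaining edges. Then $f\in H^1_0(\Graph)$ since it vanishes at the two endpoints of $\me^\ast$ and is identically zero elsewhere. A direct computation gives $\int_\Graph|f'|^2\dx = \pi^2/(2\ell_{\max})$ and $\int_\Graph|f|^2\dx = \ell_{\max}/2$, while the potential term is controlled by $\int_{\me^\ast} q|f|^2\dx \le \|f\|_\infty^2\int_{\me^\ast}q\dx \le \|q\|_1$, using $\|f\|_\infty=1$ and $q\ge 0$. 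Substituting into the Rayleigh quotient yields
\[
\lambda_1(\Graph) \le \frac{\pi^2/(2\ell_{\max})+\|q\|_1}{\ell_{\max}/2} = \frac{\pi^2}{\ell_{\max}^2}+\frac{2\|q\|_1}{\ell_{\max}}.
\]

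To convert this into the stated bound I would invoke the elementary fact that the longest edge is at least as long as the average edge, i.e. $\ell_{\max}\ge |\Graph|/|\mE|$, equivalently $1/\ell_{\max}\le |\mE|/|\Graph|$. Since the right-hand side above is increasing in $1/\ell_{\max}$, this gives
\[
\lambda_1(\Graph)\le \left(\frac{\pi|\mE|}{|\Graph|}\right)^2+\frac{2\|q\|_1|\mE|}{|\Graph|}
\le \left(\frac{\pi|\mE|}{|\Graph|}+\|q\|_1\right)^2-\|q\|_1^2,
\]
the last step being completion of the square together with $2\|q\|_1|\mE|/|\Graph|\le 2\pi\|q\|_1|\mE|/|\Graph|$. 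There is no serious technical obstacle here: the proof is elementary once the two ideas are in place. The only point requiring care is the reduction to $H^1_0(\Graph)$ test functions — this is precisely what makes the vertex term vanish and renders the estimate uniform over all admissible $\VM$, $w$ and $\mV_0$ — and, for general sign-changing $q$, one would simply replace $\|q\|_1$ by $\|q_+\|_1$ throughout, as noted before the statement.
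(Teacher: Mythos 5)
Your proof is correct and follows essentially the same route as the paper's: restrict to $H^1_0(\Graph)$ so that the vertex terms vanish, localise the test function on an edge of length at least the average $|\Graph|/|\mE|$, and control the potential term by $\|q\|_1\|f\|_\infty^2$. The only cosmetic difference is that you plug in the explicit sine ground state (giving the cross term $2\|q\|_1|\mE|/|\Graph|$, marginally sharper than the paper's $2\pi\|q\|_1|\mE|/|\Graph|$ obtained via the inequality $\|f\|_\infty^2\le 2\|f'\|_2\|f\|_2$ and the Dirichlet eigenvalue $\lambda_1^D(0)$), and both versions imply \eqref{eq:pseudoKKMM}.
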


Note that $\frac{|\Graph|}{|\mE|}$ is exactly the average edge length of $\Graph$.

\begin{proof}
We first observe that the inequality
\begin{equation}
\label{eq:sobolev-embedding}
	\|f\|_\infty^2 \leq 2 \|f'\|_2\|f\|_2
\end{equation}
is valid for all $f \in H^1_0 (\Graph)$: indeed, fixing any edge $\me$, identified with the interval $[0,\ell_{\me}]$, and any $x \in (0,\ell_{\me})$, by the fundamental theorem of calculus and the Cauchy--Schwarz inequality, since $f(0)=0$ we have
\begin{displaymath}
	|f(x)|^2 = \int_0^x \big(|f(t)|^2\big)'\,\textrm{d}t \leq 2\int_0^x |f'(t)||f(t)|\,\textrm{d}t \leq 2\|f'\|_2\|f\|_2.
\end{displaymath}
Now suppose that $f \in H^1_0 (\mathcal G)$ satisfies $\|f\|_2 = 1$, then by \eqref{eq:sobolev-embedding}
\begin{displaymath}
	\lambda_1 (\Graph) \leq \int_{\mathcal G} |f'(x)|^2+q(x)|f(x)|^2\,\textrm{d}x \leq \|f'\|_2^2 + 2\|q\|_1 \|f'\|_2.
\end{displaymath}
Taking the infimum over all such functions $f$ yields
\begin{displaymath}
	\lambda_1 (\Graph) \leq \lambda_1^D (0) + 2\lambda_1^D(0)^{1/2}\|q\|_1,
\end{displaymath}
where $\lambda_1^D(0)$ is the Dirichlet Laplacian on $\mathcal G$, i.e., with zero potential and Dirichlet conditions at all vertices of $\mathcal G$ (that is, the Dirichlet Laplacian on the collection of $|\mE|$ disjoint intervals comprising the edges of $\mathcal G$). Now at least one edge of $\mathcal G$ has length at least $|\mathcal G|/|\mE|$; and so $\lambda_1^D (0) \leq \pi^2|\mE|^2/|\mathcal G|^2$. This yields \eqref{eq:pseudoKKMM}.
\end{proof}

\bibliographystyle{plain}

\end{document}